\title[Spectra of Weighted Composition Operators]{Spectra of Some Weighted Composition Operators on $H^{2}$}
\author{Carl C. Cowen, Eungil Ko, Derek Thompson, and Feng Tian}%
\address{IUPUI (Indiana University -- Purdue University, Indianapolis), Indianapolis, Indiana 46202-3216}%
\email{ccowen@math.iupui.edu}%
\address{Ewha Womans University, Seoul 120-750, S. Korea}
\email{eiko@ewha.ac.kr}
\address{Trine University, Angola, Indiana 46703}
\email{theycallmedt@gmail.com}
\email{tianf@trine.edu}
\date{16 May 2014}%
\thanks{The second author was supported by Basic Science Research Program
through the National Research Foundation of Korea (NRF) grant funded
by the Ministry of Education, Science and Technology (2012R1A2A2A02008590).}%
\subjclass[2010]{Primary: 47B33,47B35; Secondary: 47A10, 47B20, 47B38}%
\keywords{weighted composition operator, spectrum of an operator, hyponormal operator}%
\newtheorem{thm}{Theorem}
 \newtheorem{lemma}[thm]{Lemma}
 \newtheorem{cor}[thm]{Corollary}
 \newtheorem{example}[thm]{Example}
 \newcommand{\dfn}{\noindent {\bf Definition} }
  \renewcommand{\Re}{{\rm Re}}
   \newcommand{\ph}{\ensuremath{\varphi}}
   \newcommand{\Ht}{\ensuremath{H^2}}
   \newcommand{\HtD}{\ensuremath{H^2(\D)}}
   \newcommand{\Hi}{\ensuremath{H^\infty}}
   \newcommand{\D}{\ensuremath{\mathbb{D}}}
   \newcommand{\Co}{\ensuremath{\mathbb{C}}}
   \newcommand{\C}{\ensuremath{C_{\varphi}}}
   \newcommand{\T}{\ensuremath{T_\psi}}
  \newcommand{\W}{\ensuremath{W_{\psi,\varphi}}}
  \newcommand{\Ws}{\ensuremath{W_{\psi,\varphi}^{\textstyle\ast}}}
    \newfont{\caps}{cmcsc10}  
  \newfont{\jour}{cmti10}  
  \newcommand{\vypp}[4]{ {\bf #1}(#2), #3--#4.}
\newcommand{\tams}[4]{{\jour Trans.~Amer.~Math.~Soc.}\vypp{#1}{#2}{#3}{#4}}
 \newcommand{\canj}[4]{{\jour Canadian~J.~Math.}\vypp{#1}{#2}{#3}{#4}}
  \newcommand{\ill}[4]{{\jour Illinois~J.~Math.}\vypp{#1}{#2}{#3}{#4}}
  \newcommand{\inteq}[4]{{\jour Integral Equations Operator Theory}\vypp{#1}{#2}{#3}{#4}}
  \newcommand{\jfa}[4]{{\jour J. Functional Analysis}\vypp{#1}{#2}{#3}{#4}}
  \newcommand{\jmaa}[4]{{\jour J. Math. Anal. App.}\vypp{#1}{#2}{#3}{#4}}
\begin{document}

\begin{abstract}
We completely characterize the spectrum of a weighted composition
operator \W\ on \HtD\ when \ph\ has Denjoy-Wolff point $a$ with $0<|\ph'(a)|< 1$,
the iterates, $\ph_n$, converge uniformly to $a$, and $\psi$ is in \Hi\ and continuous
at $a$. We also give bounds and some computations 
when $|a|=1$ and $\ph'(a)=1$ and, in addition, show that these symbols include
all linear fractional \ph\ that are hyperbolic and parabolic non-automorphisms.
Finally, we use these results to eliminate possible weights  $\psi$
so that  \W\ is seminormal. 
\end{abstract}
\maketitle

\section{Introduction}

If $\psi$ is in \Hi\ and \ph\ is analytic map of the unit
disk into itself, the \emph{weighted composition operator on
\Ht\ with symbols $\psi$ and \ph} is the operator \W,
where $\T$ is the analytic Toeplitz operator given by $\T(h)=\psi h$
for $h$ in \Ht,  \C\ is the composition operator on \Ht\
given by $\C(h)=h\circ\varphi$. Clearly, if $\psi$ is bounded on
the disk, then \W\ is bounded on $\Ht$ and $\|\W\|\leq \|\psi\|_{\infty}\|\C\|$.
Although it will have little impact on our work, it is not necessary
for $\psi$ to be bounded for \W\ to be bounded. To avoid trivialities
and special cases, we will assume $\psi$ is not identically zero
and $\ph$ is not a constant mapping.

Weighted composition operators have been studied occasionally over
the past few decades, but have usually arisen in answering other questions
related to operators on spaces of analytic functions, such as questions
about multiplication operators or composition operators. For example,
Forelli~\cite{forelli} showed that the only isometries of $H^{p}$
for $p\neq2$ are weighted composition operators and that the isometries
for $H^{p}$ with $p\neq2$ have analogues that are isometries of
$H^{2}$ (but there are also many other isometries of $H^{2}$). Weighted
composition operators also arise in the description of commutants
of analytic Toeplitz operators (see for example~\cite{commut,commut2}
and in the adjoints of composition operators (see for example~\cite{Co88,book,adj}).

Recently, work has begun on studying the spectrum of weighted composition
operators on $H^{2}$ more carefully. Gunatillake~\cite{gunatTh}
characterized the spectrum when \ph\ has an interior fixed point
and \W\ is compact. The first two authors~\cite{wcomp} characterized
the spectrum when \W\ is a self-adjoint operator. Bourdon and Narayan
extended their work~\cite{BN} to characterize the
spectrum when \W\ is unitary and when \W\ is normal with interior
fixed point. Gunatillake~\cite{gunatInv} defined invertible
weighted composition operators on $H^{2}$ and identified their spectrum.
Very recently, Hyv\"arinen, Lindstr\"om, Nieminen, and Saukko~\cite{HLNS}
extended his work to when \ph\ is an automorphism but \W\ is not
necessarily invertible.

Our work finds the spectra of \W\ with relatively weak conditions
on $\psi$, but a rather strong one on \ph, which is that the iterates of  \ph\
converge uniformly on \emph{all} of \D\ to the Denjoy-Wolff point $a$,
rather than just on compact subsets of  \D. In Section 2, we identify situations when \ph\ 
satisfies this uniformity condition on the  convergence of its iterates,
and show that this class of symbols is non-trivial. In Section
3, we give general bounds for $\sigma(\W)$ that define the spectrum
when $\sigma(\C)$ is given by the closure of $\sigma_{p}(\C)$.
In Section 4, we are much more specific about $\sigma_{p}(\W)$ when
$\ph'(a)<1$ and give some examples. In Section 5, we eliminate some
possibilities where \W\ could be seminormal. Finally, we suggest
further areas of study in Section 6.

\section{When are the iterates of \ph\ uniformly convergent?}

To accomplish the work of this paper, we make a rather strong assumption
that \ph\ converges uniformly on all of \D\ to the Denjoy-Wolff
point $a$. Our work in this section will further explain when this
phenomenon occurs. To facilitate reference to the property of uniform convergence
of the iterates of \ph, we make the following definition. \\[-2ex] 

\dfn We say \emph{UCI holds for \ph} or \emph{\ph\ satisfies UCI} if \ph\ is
an analytic map of the unit disk \D\ into itself with Denjoy-Wolff point $a$
and the iterates $\ph_n$ of \ph\ converge uniformly, on all of \D, to $a$.\\[-2ex]

We begin by showing that this condition is not particularly
helpful when the Denjoy-Wolff point $a$ belongs to \D.
\begin{thm}
\label{DbarinD} Suppose $\ph:\D\rightarrow\D$ is analytic and continuous
on $\partial\D$. If the Denjoy-Wolff point $a$ of \ph\ is in
\D, then $\ph_{n}\rightarrow a$ uniformly if and only if there
is $N>0$ such that $\ph_{N}(\overline{\D})\subseteq\D$.\end{thm}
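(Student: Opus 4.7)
The plan is to prove both implications using two standard tools: the classical Denjoy--Wolff theorem (which gives uniform convergence on compact subsets of $\D$) plus the observation that continuity of \ph\ on $\partial\D$ together with \ph\ mapping \D\ into \D\ makes each iterate $\ph_n$ continuous on $\overline{\D}$.

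For the forward direction, I would assume $\ph_n \to a$ uniformly on \D\ and pick $\epsilon > 0$ with $|a| + \epsilon < 1$; such an $\epsilon$ exists because $a \in \D$. Uniform convergence then yields an $N$ with $|\ph_N(z) - a| < \epsilon$ for all $z \in \D$. Since $\ph$ is continuous on $\overline{\D}$ (continuous on $\partial\D$ by hypothesis and analytic on \D), so is $\ph_N$, and the inequality extends to $\overline{\D}$ by continuity. Therefore $|\ph_N(z)| \leq |a| + \epsilon < 1$ on $\overline{\D}$, giving $\ph_N(\overline{\D}) \subseteq \D$.

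For the reverse direction, assume $\ph_N(\overline{\D}) \subseteq \D$ for some $N$. Since $\ph_N$ is continuous on the compact set $\overline{\D}$, its image $K := \ph_N(\overline{\D})$ is a compact subset of \D. By the classical Denjoy--Wolff theorem, $\ph_n \to a$ uniformly on $K$. Writing $\ph_{n+N}(z) = \ph_n(\ph_N(z))$ with $\ph_N(z) \in K$, we conclude that $\ph_{n+N} \to a$ uniformly on $\overline{\D}$, hence on \D, which is what we wanted.

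Neither direction presents a real obstacle; the only subtlety is checking that $\ph_N$ is actually continuous on $\overline{\D}$ so that the strict inequality $|\ph_N(z)| < 1$ on \D\ can be upgraded to $\overline{\D}$ via compactness. This follows immediately from the hypothesis that \ph\ is continuous on $\partial\D$ (and analytic, hence continuous, on \D) together with the fact that \ph\ maps $\overline{\D}$ into $\overline{\D}$, so iterates are well-defined compositions of continuous self-maps of $\overline{\D}$.
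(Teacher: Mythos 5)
Your proof is correct and follows essentially the same route as the paper: the reverse direction is identical (Denjoy--Wolff on the compact set $\ph_N(\overline{\D})$), and the forward direction uses the same idea of forcing $\ph_N(\D)$ into a small neighborhood of $a$ strictly inside $\D$ and then passing to $\overline{\D}$ by continuity, merely phrased as a direct estimate rather than the paper's contradiction at a hypothetical boundary-to-boundary point. Your explicit remark that $\ph(\overline{\D})\subseteq\overline{\D}$ makes the iterates continuous on $\overline{\D}$ is a worthwhile detail the paper leaves implicit.
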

\begin{proof}
Suppose there is $N>0$ such that $\ph_{N}(\overline{\D})\subseteq\D$.
Since $\ph_{n}$ always converges uniformly on compact subsets of
\D\ to $a$ by the Denjoy-Wolff Theorem~\cite{book}
and $\ph_{N}(\overline{\D})$ is a compact subset of \D, we have
that $\ph_{n}\rightarrow a$ uniformly on \D.

To prove the other direction, let $M$ be the minimum distance between
$a$ and the unit circle. Since $\ph_{n}\rightarrow a$ uniformly
on \D, for $\epsilon=M/2$, there exists $N>0$ such that $|\ph_{N}(z)-a|<\epsilon,\forall z\in\D$.
Suppose $\ph_{N}(b_{1})=b_{2},|b_{1}|=|b_{2}|=1$. Then for our given
$\epsilon$, since $\ph_{N}$ is continuous on the unit circle, there
exists $\delta>0$ so that $|b_{1}-z|<\delta\Rightarrow|b_{2}-\ph_{N}(z)|<\epsilon$.
However, for $z$ such that $|b_{1}-z|<\delta$, 
\begin{align*}
M\leq\left|b_{2}-a\right| & =\left|b_{2}-\ph_{N}(z)+\ph_{N}(z)-a\right|\\
 & \leq\left|b_{2}-\ph_{N}(z)|+|\ph_{N}(z)-a\right|\\
 & <2\epsilon=M
\end{align*}
which is a contradiction, so $\ph_{N}(\overline{\D})\subseteq\D$. 
\end{proof}
The following corollary shows that this is of interest.
\begin{cor}
Suppose $\ph:\D\rightarrow\D$ is analytic and continuous on $\partial\D$.
If the Denjoy-Wolff point $a$ of \ph\ is in \D\ and $\ph_{n}\rightarrow a$
uniformly, then \C\ is power-compact. Furthermore, any associated
weighted composition operator \W\ with $\psi\in \Hi$ is
power-compact. \end{cor}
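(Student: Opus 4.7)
The plan is to exploit Theorem~\ref{DbarinD} to reduce power-compactness to the standard fact that composition operators with ``small'' symbols are compact.

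By Theorem~\ref{DbarinD}, the hypotheses give an integer $N>0$ with $\ph_N(\overline{\D})\subseteq\D$. Because $\ph_N$ is continuous on $\overline{\D}$, its range is a compact subset of $\D$, so $\|\ph_N\|_\infty<1$. It is a classical result (see, e.g., Shapiro's compactness theorem, or~\cite{book}) that whenever an analytic self-map $\phi$ of $\D$ satisfies $\|\phi\|_\infty<1$, the composition operator $C_\phi$ is compact on $\Ht$. Applying this to $\ph_N$, we conclude that $\C^N = C_{\ph_N}$ is compact, so $\C$ is power-compact.

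For the weighted composition operator, the strategy is to express $\W^N$ as an analytic Toeplitz operator times $C_{\ph_N}$ and invoke the previous step. A direct induction using the intertwining identity $\C T_\psi = T_{\psi\circ\ph}\C$ yields
\[
\W^N \;=\; T_{\psi\cdot(\psi\circ\ph)\cdots(\psi\circ\ph_{N-1})}\,C_{\ph_N}.
\]
Since $\psi\in\Hi$ and each $\psi\circ\ph_k\in\Hi$, the product $\psi_N := \psi\cdot(\psi\circ\ph)\cdots(\psi\circ\ph_{N-1})$ lies in $\Hi$, so $T_{\psi_N}$ is a bounded operator on $\Ht$. As $C_{\ph_N}$ is compact and the compacts form an ideal, $\W^N$ is compact and \W\ is power-compact.

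There is no real obstacle here: once Theorem~\ref{DbarinD} hands us the single iterate $\ph_N$ whose image is relatively compact in $\D$, everything reduces to the well-known compactness criterion for $C_{\ph_N}$ and the standard iteration formula for $\W^N$. The only technical point worth verifying carefully is the identity $\W^N = T_{\psi_N}C_{\ph_N}$, which is a routine induction on $N$.
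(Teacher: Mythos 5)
Your proof is correct and follows essentially the same route as the paper: Theorem~\ref{DbarinD} supplies an iterate $\ph_N$ with $\ph_N(\overline{\D})\subseteq\D$, the standard compactness criterion gives that $C_{\ph_N}=\C^N$ is compact, and the identity $\W^N=T_{\psi\cdot(\psi\circ\ph)\cdots(\psi\circ\ph_{N-1})}C_{\ph_N}$ together with the ideal property of the compacts handles the weighted case. No gaps.
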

\begin{proof}
Since $\ph(\overline{\D})\subseteq\D$ is a sufficient condition
for \C\ to be compact~\cite{book}, we see that
by Theorem~\ref{DbarinD} $C_{\varphi_N}=\C^N$ is compact
for some $N>0$ and \C\ is power-compact. Since compact operators
are an ideal in $\mathcal{B}(H^{2})$, $(\W)^{N}=T_{\zeta}C_{\varphi_N}$
is compact, where $\zeta=\psi(\psi\circ\ph)...(\psi\circ\ph_{N-1})$. 
\end{proof}
Since Gunatillake~\cite{gunatTh} and others have already characterized
the spectrum of compact weighted composition operators (and therefore power
compact weighted composition operators) when \ph\ has an interior fixed
point, we will instead turn our work to when the Denjoy-Wolff point
is on $\partial\D$, although our results will include the interior
fixed point case. Next, we indicate some conditions on \ph\ when
the Denjoy-Wolff point is on $\partial\D$, and give some examples.
\begin{thm}
If $\ph:\D\rightarrow\D$ is analytic in \D\ and continuous on
$\partial \D$, has Denjoy-Wolff point $a$ with $|a|=1$ and $\ph_{n}\rightarrow a$
uniformly, then $a$ is the only fixed point of \ph\ in the closed
unit disk.\end{thm}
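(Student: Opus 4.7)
The proof falls naturally into two cases, according as the putative second fixed point lies inside $\D$ or on $\bD$, and both cases reduce to the same simple observation: a fixed point is stationary under the iterates, but the iterates tend uniformly to $a$.

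First I would dispose of interior fixed points. Suppose $p\in\D$ satisfies $\ph(p)=p$; then $\ph_n(p)=p$ for every $n$, while the hypothesis $\ph_n\to a$ uniformly on $\D$ (in particular pointwise) forces $p=a$, which is impossible since $|p|<1=|a|$. (Alternatively, an interior fixed point would be the Denjoy--Wolff point by its characterization, contradicting $|a|=1$.)

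Next I would handle boundary fixed points. Suppose $b\in\bD$ with $b\neq a$ and $\ph(b)=b$. Since \ph\ is continuous on $\bD$ and maps $\overline{\D}$ into $\overline{\D}$, each iterate $\ph_n$ extends continuously to $\overline{\D}$, and an easy induction gives $\ph_n(b)=b$ for every $n$. Now fix $\epsilon>0$. By the uniform convergence of $\ph_n$ to $a$ on \D, there exists $N$ such that
\[
|\ph_n(z)-a|<\epsilon\qquad \text{for all } z\in\D \text{ and all } n\geq N.
\]
Taking the limit as $z\to b$ from inside \D\ and using the continuity of $\ph_n$ on $\overline{\D}$, I obtain $|\ph_n(b)-a|\leq\epsilon$, i.e.\ $|b-a|\leq\epsilon$. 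Since $\epsilon>0$ was arbitrary, $b=a$, a contradiction.

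There is no real obstacle here; the only point requiring a little care is the passage of the uniform bound from the open disk to a boundary fixed point, which is handled by continuity of each $\ph_n$ on $\overline{\D}$. Combining the two cases shows that $a$ is the unique fixed point of \ph\ in $\overline{\D}$, completing the proof.
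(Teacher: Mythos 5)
Your proof is correct and follows essentially the same route as the paper: rule out an interior fixed point via the Denjoy--Wolff characterization, then for a boundary fixed point $b\neq a$ combine the uniform bound $|\ph_n(z)-a|<\epsilon$ on \D\ with continuity of $\ph_n$ at $b$ to conclude $|b-a|\leq\epsilon$ for every $\epsilon$. The paper writes this last step with an explicit $\delta$ and the triangle inequality rather than a limit $z\to b$, but the argument is the same.
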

\begin{proof}
Suppose $\ph(b)=b$, $b\neq a$. Since the Denjoy-Wolff point is on the
boundary, we must have $|b|=1$, or else $b$ would be the Denjoy-Wolff
point. Since $\ph_{n}\rightarrow a$ uniformly, given $\epsilon>0$,
there is an $N$ such that $|\ph_{N}(z)-a|<\epsilon,\forall z\in\D$.
Note that $\ph_{N}$ is continuous at $b$. For the same $\epsilon$,
there is $\delta$ such that $|b-z|<\delta\Rightarrow|\ph_{N}(b)-\ph_{N}(z)|=|b-\ph_{N}(z)|<\epsilon$.
Let $z$ be such that $|b-z|<\delta$. Then 
\[
\left|b-a\right|=\left|b-\ph_{N}(z)+\ph_{N}(z)-a\right|\leq\left|b-\ph_{N}(z)\right|+\left|\ph_{N}(z)-a\right|<2\epsilon
\]
However, if we take $\epsilon<|b-a|/2$, we have a contradiction. 
\end{proof}
Although our work so far indicates that the class of weighted composition operators
where \ph\ satisfies UCI  may be small, we now give sufficient
conditions for \ph\ to satisfy UCI and follow with some
examples. Much of the following proof is owed to~\cite{BMS}.
\begin{thm}
\label{julia} Suppose $\ph:\D\rightarrow\D$ is analytic in \D\
and continuous on $\partial \D$ and has Denjoy-Wolff point $a$
with $|a|=1,\ph'(a)<1$. If $\ph_{N}(\overline{\D})\subseteq\D\cup\{a\}$
for some $N$, then $\ph_{n}\rightarrow a$ uniformly in \D.\end{thm}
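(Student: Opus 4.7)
The plan is to apply Julia's lemma at the Denjoy--Wolff point $a$ to trap the iterates $\ph_n(\overline{\D})$ inside a family of horodisks at $a$ that shrink to $\{a\}$. First, I would reduce to the case $N=1$: the iterate $\ph_N$ has Denjoy--Wolff point $a$ with angular derivative $\ph'(a)^N \in (0,1)$, and so $\ph_N$ satisfies the same hypotheses. Uniform continuity of the finitely many maps $\ph_1,\ldots,\ph_{N-1}$ on $\overline{\D}$ then reduces uniform convergence $\ph_n \to a$ on $\overline{\D}$ to uniform convergence of the iterates of $\ph_N$. So I may assume $\ph(\overline{\D}) \subseteq \D\cup\{a\}$ and write $\alpha = \ph'(a) \in (0,1)$.

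For each $s > 0$, introduce the horodisk $E(a,s) = \{z\in \D : |a-z|^2/(1-|z|^2) < s\}$, which is the open Euclidean disk of center $a/(1+s)$ and radius $s/(1+s)$, tangent internally to $\bD$ at $a$. Julia's lemma gives $\ph(E(a,s)) \subseteq E(a,\alpha s)$ for every $s > 0$. The key step is to show that $K := \ph(\overline{\D})$ is contained in the closed horodisk $\overline{E(a,M)}$ for some finite $M$. Granting this, iterating Julia yields
\[
\ph_n(\overline{\D}) \;=\; \ph_{n-1}(K) \;\subseteq\; \overline{E(a, \alpha^{n-1} M)},
\]
whose Euclidean diameter $2\alpha^{n-1}M/(1+\alpha^{n-1}M)$ tends to $0$, so $\sup_{z \in \overline{\D}} |\ph_n(z) - a| \to 0$, as required.

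I expect the crux to be the containment $K \subseteq \overline{E(a,M)}$, equivalently the boundedness of the Julia quotient $f(w) := |a-w|^2/(1-|w|^2)$ on $K \setminus \{a\}$. If $f$ were unbounded on $K \setminus \{a\}$, then by compactness of $K \subseteq \D\cup\{a\}$ and continuity of $f$ on $\D$, there would exist a sequence $w_n = \ph(z_n) \in K$ with $w_n \to a$ tangentially, meaning $f(w_n) \to \infty$. Extracting a subsequence, $z_n \to z_* \in \overline{\D}$ with $\ph(z_*) = a$, and the maximum modulus principle forces $z_* \in \bD$. To rule this out, I would appeal to the Julia--Carath\'eodory theorem at $z_*$: the constraint $\ph(\overline{\D}) \subseteq \D\cup\{a\}$ together with continuity of $\ph$ on $\overline{\D}$ and analyticity in $\D$ should yield that $\liminf_{z\to z_*,\,z\in\D}(1-|\ph(z)|)/(1-|z|)$ is finite, so that $\ph$ has an angular derivative at $z_*$ and Julia's inequality applies at $z_*$. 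This in turn forces $\ph(z)$ to approach $a$ inside a Stolz angle at $a$ as $z \to z_*$ in $\D$, contradicting the assumed tangential approach of $w_n$ to $a$. Verifying finiteness of the Julia quotient at $z_*$ purely from the geometric hypothesis $\ph(\overline{\D}) \subseteq \D\cup\{a\}$ is the most delicate part of the argument and absorbs the bulk of the technical work.
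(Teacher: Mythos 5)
Your overall architecture is exactly the paper's: reduce to $N=1$ (the paper simply says ``without loss of generality''; your uniform-continuity argument for why this reduction is legitimate is a correct elaboration), show that $\ph(\overline{\D})$ lies in a single horodisk $H(a,\lambda)=\{z:|a-z|^{2}\leq\lambda(1-|z|^{2})\}$, apply Julia's Lemma iteratively to get $\ph_{n}(\overline{\D})\subseteq H(a,\ph'(a)^{n-1}\lambda)$, and conclude from the shrinking of these disks. The iteration and the diameter estimate are fine and match the paper's $|a-\ph_{n}(z)|\leq\sqrt{\lambda}\,\ph'(a)^{n/2}$.

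The gap is in the step you yourself flag as the crux: containment of $\ph(\overline{\D})$ in some horodisk, i.e.\ boundedness of the Julia quotient $|a-w|^{2}/(1-|w|^{2})$ on $\ph(\overline{\D})\setminus\{a\}$. The paper dispatches this in one sentence by appealing to compactness and connectedness of $\ph(\overline{\D})\subseteq\D\cup\{a\}$ (crediting the details to \cite{BMS}); your proposed substitute via Julia--Carath\'eodory at a boundary preimage $z_{*}$ of $a$ does not close. First, finiteness of $d(z_{*})=\liminf_{z\to z_{*}}(1-|\ph(z)|)/(1-|z|)$ is not a consequence of continuity of $\ph$ on $\overline{\D}$ together with $\ph(\overline{\D})\subseteq\D\cup\{a\}$; it can be $+\infty$, and you offer no mechanism to exclude this. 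Second, and more seriously, even granting $d(z_{*})<\infty$, Julia's inequality at $z_{*}$ only gives $|a-\ph(z)|^{2}/(1-|\ph(z)|^{2})\leq d(z_{*})\,|z_{*}-z|^{2}/(1-|z|^{2})$, which controls images of points approaching $z_{*}$ \emph{nontangentially}; the offending points $z_{n}$ may approach $z_{*}$ tangentially or lie on $\bD$ itself, where the right-hand side is unbounded or undefined, so no contradiction with the tangential approach of $w_{n}=\ph(z_{n})$ to $a$ is obtained. You therefore still owe the theorem its central estimate. Be aware, too, that compactness and connectedness alone cannot supply it: a compact connected subset of $\D\cup\{a\}$ can perfectly well approach $a$ tangentially, so the honest source of the horodisk containment is the function-theoretic analysis in \cite{BMS}, not plane geometry.
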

\begin{proof}
Without loss of generality, we will assume $\ph(\overline{\D})\subseteq\D\cup\{a\}$.
Since $\ph(\overline{\D})\subseteq\D\cup\{a\}$ and $\ph(\overline{\D})$
is connected, it fits within the disk 
\[
H(a,\lambda):=\left\{ z\in\Co:\left|a-z\right|^{2}\leq\lambda\left(1-\left|z\right|^{2}\right)\right\} 
\]
for some fixed $\lambda>0$. Disks of this type are Euclidean subdiscs
of \D\ centered at $a/(1+\lambda)$ with radius $\lambda/(1+\lambda)$,
and are tangent to the unit circle at $a$. Julia's Lemma~\cite{book}
shows that $\ph(H(a,\lambda))\subseteq H(a,\ph'(a)\lambda)$. Applying
\ph\ iteratively, we see that for any $z$ in this set, we have
\[
|a-\ph_{n}(z)|^{2}\leq\ph'(a)^{n}\lambda(1-|\ph_{n}(z)|^{2})
\]
and therefore 
\[
|a-\ph_{n}(z)|\leq\sqrt{\lambda}\ph'(a)^{n/2}(1-|\ph_{n}(z)|)\leq\sqrt{\lambda}\ph'(a)^{n/2}
\]

Thus, for any $\epsilon>0$, there is $N>0$ such that for $n>N$,
$\sqrt{\lambda}\ph'(a)^{n/2}<\epsilon$ (since $\ph'(a)<1$). Then
$|\ph_{n}(z)-a|\leq\sqrt{\lambda}\ph'(a)^{n/2}<\epsilon$ for $n>N$. 
\end{proof}
Although this does not completely characterize UCI holding for \ph\
when $|a|=1$ and $\ph'(a)<1$, we see from this sufficient condition
that this class includes, at least, \ph\ that are linear fractional
non-automorphisms, such as $\ph(z)=\frac{1}{2}z+\frac{1}{2}$. When
$\ph'(a)=1$, the situation is even more delicate because the conditions
above are not sufficient, as can be seen when \ph\ is a parabolic
automorphism. However, if \ph\ is a linear fractional non-automorphism
with $\ph'(a)=1$, we see that \ph\ actually satisfies UCI:
\begin{example}
Let \ph\ be a linear fractional map, not an automorphism, with
Denjoy Wolff point $a$ such that $|a|=1$ and $\ph'(a)=1$. Without loss of generality,
assume $a=1$. Such symbols form a semigroup $\ph_{t}(z)=\frac{t+(2-t)z}{(2+t)-tz}$.
Then we have
\begin{align*}
\left|\ph_{t}(z)-1\right| & =\left|\frac{t+(2-t)z}{(2+t)-tz}-\frac{(2+t)-tz}{(2+t)-tz}\right|\\
 & =\left|\frac{2z-2}{(2+t)-tz}\right|\\
 & =\left|\frac{2(z-1)}{2+t(1-z)}\right|\\
 & =\left|\frac{2(1-z)}{2+t(1-z)}\right|\\
 & =\left|\frac{2}{\frac{2}{1-z}+t}\right|\\
 & \leq\left|\frac{2}{t}\right|=\frac{2}{t}\rightarrow0,\;\mbox{as }t\rightarrow\infty
\end{align*}
since $\Re\left\{ \frac{2}{1-z}\right\} >1$ for $z\in\D$. Thus if
\ph\ is a linear fractional non-automorphism with Denjoy-Wolff
point $a$ and $\ph'(a)=1$, then $\ph_{n}\rightarrow a$ uniformly
in \D. 
\end{example}
Now we see that UCI holding for \ph\ can arise when $\ph'(a)<1$ or
$\ph'(a)=1$. Next, we show general bounds for the spectra of a weighted
composition operator with UCI holding for the compositional symbol,
and later we discuss the differences more specifically between the
two cases.

\section{Spectral bounds for \W}

Throughout the remainder of the paper, we will assume that $\psi$ is in \Hi,
continuous at the Denjoy-Wolff point $a$ of $\ph$, and that $\psi(a)\neq0$.

We now offer some lemmas which will give us an inequality between
the spectra of \W\ and $\psi(a)\C$.
\begin{lemma}
\label{AB} If $A$ and $B$ are bounded operators on a Hilbert space
$\mathcal{H}$, then:
\begin{enumerate}
\item \label{enu:AB-1}If $ABv=\lambda v$ and $Bv\neq0$, then $Bv$ is
an eigenvector for $BA$ with eigenvalue $\lambda$.
\item \label{enu:AB-2}$\sigma(AB)\cup\{0\}=\sigma(BA)\cup\{0\}$.
\end{enumerate}
\end{lemma}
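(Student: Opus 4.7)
The plan is to prove part~(1) by a short direct computation and then obtain part~(2) via a well-known algebraic identity relating the two resolvents. For part~(1), I would start from $ABv=\lambda v$ and simply apply $B$ to both sides, obtaining $BABv=\lambda Bv$, that is, $(BA)(Bv)=\lambda(Bv)$. The hypothesis $Bv\neq 0$ then says precisely that $Bv$ is a genuine eigenvector of $BA$ with eigenvalue~$\lambda$, so there is essentially nothing to check.

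For part~(2), the substantive content is that the nonzero spectral points of $AB$ and $BA$ agree. Equivalently, for each $\lambda\neq 0$, I want to show that $\lambda I-AB$ is invertible in $\mathcal{B}(\mathcal{H})$ if and only if $\lambda I-BA$ is. The approach I would take is to produce an explicit two-sided inverse of one from an inverse of the other. Specifically, assuming $R=(\lambda I-AB)^{-1}$ exists, I claim that
\[
S=\frac{1}{\lambda}\bigl(I+BRA\bigr)
\]
is a two-sided inverse of $\lambda I-BA$. Verifying this reduces to checking the identity
\[
(\lambda I-BA)\bigl(I+BRA\bigr)=\lambda I,
\]
which follows by expanding the left side, using $BABRA=B(AB)RA=B(\lambda I-(\lambda I-AB))RA=\lambda BRA-BA$, and collecting terms; the reverse composition is handled by the symmetric computation. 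By the symmetric roles of $A$ and $B$, invertibility of $\lambda I-BA$ likewise produces invertibility of $\lambda I-AB$, and combining the two directions gives $\sigma(AB)\setminus\{0\}=\sigma(BA)\setminus\{0\}$, which is the same as the stated equality after adjoining $\{0\}$.

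I do not anticipate any real obstacle here: everything occurs inside the bounded operator algebra, so there are no convergence or domain issues, and the only care needed is in the algebraic bookkeeping with the factor $\lambda^{-1}$ and the noncommutativity of $A$ and $B$. Note also that part~(1) can be read as the eigenvalue case of the transfer principle underlying part~(2), but stating it separately is convenient because later arguments will want to identify the \emph{specific} eigenvector $Bv$, not merely the eigenvalue.
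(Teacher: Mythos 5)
Your proposal is correct and follows essentially the same route as the paper: part~(1) by applying $B$ to both sides of $ABv=\lambda v$, and part~(2) via the explicit inverse $\frac{1}{\lambda}\left(I+B(\lambda I-AB)^{-1}A\right)$ for $\lambda I-BA$. The only difference is that you spell out the verification of the two-sided inverse, which the paper leaves as a ``direct computation.''
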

\begin{proof}
(\ref{enu:AB-1}) If $ABv=\lambda v$ for some $\lambda$, then $BA(Bv)=B(ABv)=B(\lambda v)=\lambda(Bv)$,
but we require $Bv\neq0$ since eigenvectors need to be non-zero.

(\ref{enu:AB-2}) It can be seen by direct computation that if $\lambda\neq0$
and $(\lambda I-AB)$ is invertible, then $(\lambda I-BA)$ is invertible
with inverse $\frac{1}{\lambda}(I+B(\lambda I-AB)^{-1}A)$. Clearly
this argument also works if $A$ and $B$ are reversed, so $(\lambda I-AB)$ is
invertible if and only if $(\lambda I-BA)$ is. Since we required $\lambda\neq0$, we have
$\sigma(AB)\cup\{0\}=\sigma(BA)\cup\{0\}$. 
\end{proof}
Although part (\ref{enu:AB-1}) of Lemma \ref{AB} requires that $Bv\neq0$,
we will only be using analytic Toeplitz operators and composition
operators with trivial kernels when we apply the lemma.
\begin{lemma}
\label{opconv} Suppose $\ph:\D\rightarrow\D$ is analytic with Denjoy-Wolff
point $a$, $\ph_{n}\rightarrow a$ uniformly in \D, and $\psi\in \Hi$
is continuous at $z=a$. Then $\|T_{\psi(a)}\C-T_{\psi\circ\ph_{n}}\C\|\rightarrow0$
in $\mathcal{B}(H^{2})$ as $n\rightarrow\infty$. \end{lemma}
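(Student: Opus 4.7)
The first step is to rewrite the difference as a single operator: since $\psi(a)$ is a constant and $\psi\circ\ph_n$ lies in $H^\infty$ (because $\psi\in H^\infty$ and $\ph_n:\D\to\D$), the Toeplitz operators combine to give
\[
T_{\psi(a)}\C - T_{\psi\circ\ph_n}\C = T_{\psi(a)-\psi\circ\ph_n}\,\C.
\]
From here, the estimate $\|T_f\|=\|f\|_\infty$ for $f\in H^\infty$ and the standard submultiplicativity bound yield
\[
\bigl\|T_{\psi(a)}\C - T_{\psi\circ\ph_n}\C\bigr\|
\le \bigl\|\psi(a)-\psi\circ\ph_n\bigr\|_\infty\,\|\C\|.
\]
Since $\|\C\|$ is a fixed finite constant (independent of $n$), everything reduces to proving that $\|\psi(a)-\psi\circ\ph_n\|_\infty\to 0$.

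The second step is a straightforward $\epsilon$--$\delta$ argument that combines the two hypotheses. Fix $\epsilon>0$. Continuity of $\psi$ at $a$ (interpreted, when $|a|=1$, as a nontangential/disk limit from within $\D$, which is the form we use since $\ph_n(\D)\subseteq\D$) provides $\delta>0$ such that for every $w\in\D$ with $|w-a|<\delta$ we have $|\psi(w)-\psi(a)|<\epsilon$. By the standing hypothesis that \ph\ satisfies UCI, there is $N$ such that $|\ph_n(z)-a|<\delta$ for every $z\in\D$ and every $n\ge N$. Applying $\psi$ inside this inequality gives $|\psi(\ph_n(z))-\psi(a)|<\epsilon$ for every $z\in\D$ and every $n\ge N$, which is precisely $\|\psi(a)-\psi\circ\ph_n\|_\infty\le\epsilon$ (the $H^\infty$ norm of a bounded analytic function on \D\ equals its supremum over \D).

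Combining the two steps, $\|T_{\psi(a)}\C - T_{\psi\circ\ph_n}\C\|\le\epsilon\|\C\|$ for all $n\ge N$, and the lemma follows. There is no real obstacle; the only point that warrants care is making sure that continuity of $\psi$ at the boundary point $a$ is enough when combined with $\ph_n(\D)\subseteq\D$, and that the $H^\infty$ norm involved is genuinely $\sup_{z\in\D}|\cdot|$ rather than a boundary essential sup, both of which are immediate.
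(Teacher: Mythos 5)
Your proposal is correct and follows essentially the same route as the paper: factor the difference as $T_{\psi(a)-\psi\circ\ph_n}\C$, bound its norm by $\|\psi(a)-\psi\circ\ph_n\|_\infty\|\C\|$, and use uniform convergence of $\ph_n$ to $a$ together with continuity of $\psi$ at $a$ to drive the sup norm to zero. The only difference is that you spell out the $\epsilon$--$\delta$ step that the paper states in one line.
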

\begin{proof}
If $\ph_{n}\rightarrow a$ uniformly in \D, and $\psi$ is continuous
at $a$, then $\psi\circ\ph_{n}\rightarrow\psi(a)$ uniformly in
$D$, which implies that $\|\psi(a)-\psi\circ\ph_{n}\|_{\infty}\rightarrow0$
as $n\rightarrow\infty$. Then 
\begin{eqnarray*}
\left\Vert T_{\psi(a)}\C-T_{\psi\circ\ph_{n}}\C\right\Vert  & \leq & \left\Vert T_{\psi(a)-\psi\circ\ph_{n}}\right\Vert \left\Vert \C\right\Vert \\
 & = & \left\Vert \psi(a)-\psi\circ\ph_{n}\right\Vert _{\infty}\left\Vert \C \right\Vert \rightarrow0
\end{eqnarray*}
as $n\rightarrow\infty$, since \C\ is bounded. \end{proof}
\begin{thm}
\label{unisubset} If $\ph:\D\rightarrow\D$ is analytic with Denjoy-Wolff
point $a$, $\ph_{n}\rightarrow a$ uniformly in \D, and $\psi\in \Hi$
is continuous at $z=a$, then $\sigma(T_{\psi}\C)\subseteq\sigma(\psi(a)\C)$. \end{thm}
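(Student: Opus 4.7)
The plan is to combine the spectral invariance in Lemma~\ref{AB}(2) with the norm approximation from Lemma~\ref{opconv}. The algebraic ingredient is the standard intertwining $\C\T = T_{\psi\circ\ph}\C$ (both sides send $f$ to $(\psi\circ\ph)(f\circ\ph)$): applying Lemma~\ref{AB}(2) with $A=\T$ and $B=\C$ gives
\[
\sigma(\T\C)\cup\{0\}=\sigma(\C\T)\cup\{0\}=\sigma(T_{\psi\circ\ph}\C)\cup\{0\}.
\]
Iterating (using $T_{\psi\circ\ph_{k}}$ in place of $\T$ at the $k$th stage) yields, for every $n\geq 1$,
\[
\sigma(\T\C)\cup\{0\}=\sigma(T_{\psi\circ\ph_{n}}\C)\cup\{0\}.
\]

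Next I would transfer information from the limit operator back to these truncations. Suppose $\lambda\neq 0$ and $\lambda\notin\sigma(\psi(a)\C)$, so $\lambda I-\psi(a)\C$ is invertible. Since the invertible operators form an open subset of $\mathcal{B}(\Ht)$ and Lemma~\ref{opconv} provides $\|T_{\psi\circ\ph_{n}}\C-\psi(a)\C\|\to 0$, the operator $\lambda I-T_{\psi\circ\ph_{n}}\C$ is invertible for all sufficiently large $n$. Combined with the displayed identity this forces $\lambda\notin\sigma(\T\C)$, and hence $\sigma(\T\C)\setminus\{0\}\subseteq\sigma(\psi(a)\C)$.

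Finally I would dispatch the point $0$. Under the running assumption $\psi(a)\neq 0$ we have $\sigma(\psi(a)\C)=\psi(a)\sigma(\C)$, so $0\in\sigma(\psi(a)\C)$ if and only if $\C$ is not invertible on $\Ht$, equivalently $\ph$ is not a disk automorphism. UCI already excludes automorphisms: if $a\in\D$ then any automorphism fixing $a$ is an elliptic rotation whose iterates do not converge to a single point; if $|a|=1$ then every automorphism satisfies $\ph_{n}(\D)=\D$, so $\sup_{z\in\D}|\ph_{n}(z)-a|=2$ for all $n$, contradicting uniform convergence. Thus $0\in\sigma(\psi(a)\C)$, and the inclusion extends to all of $\sigma(\T\C)$.

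The main obstacle is this last step: Lemmas~\ref{AB} and~\ref{opconv} only give control of $\sigma(\T\C)\setminus\{0\}$, so the short but separate argument that UCI forces $\ph$ to be non-automorphic (so that $0\in\sigma(\C)$) is what closes the gap at the origin. Everything else is a fairly mechanical assembly of the two preceding lemmas with the openness of the invertible group.
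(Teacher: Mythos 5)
Your proof is correct and follows essentially the same route as the paper: iterate Lemma~\ref{AB}(2) to identify $\sigma(\T\C)$ with $\sigma(T_{\psi\circ\ph_{n}}\C)$ away from $0$, then use Lemma~\ref{opconv} together with the openness of the group of invertibles to pass to the limit operator $\psi(a)\C$. Your separate treatment of $\lambda=0$ (UCI rules out automorphisms, so $\C$ is not invertible and $0\in\sigma(\psi(a)\C)$ automatically) is a point the paper's proof silently glosses over, since Lemma~\ref{AB}(2) only controls nonzero $\lambda$.
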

\begin{proof}
Note that $(T_{\psi}\C-\lambda I)$ is invertible if and only
if $(\C T_{\psi}-\lambda I)=(T_{\psi\circ\varphi}\C-\lambda I)$
is invertible by Lemma \ref{AB}. Applying this iteratively, we see
that $(T_{\psi}\C-\lambda I)$ if and only if $(T_{\psi\circ\varphi_{n}}\C-\lambda I)$
is invertible for all $n$.

Let $\lambda\in\sigma(T_{\psi}\C)$. Then $\lambda\in\sigma(T_{\psi\circ\ph_{n}}\C)$
for all $n$ by above. By Lemma \ref{opconv}, the operators $(T_{\psi\circ\ph_{n}}\C-\lambda I)$
converge to $(T_{\psi(a)}\C-\lambda I)$ in $H^{2}$ norm. Since
the invertible operators in $\mathcal{B}(H^{2})$ are an open set
and each operator in the sequence is not invertible, we know that
$(T_{\psi(a)}\C-\lambda I)$ is also not invertible, so $\lambda\in\sigma(\psi(a)\C)$. 
\end{proof}
Given the theorem above, it is seen that we assume $\psi(a)\neq0$
simply to avoid trivial cases where $\sigma(\W)=\{0\}$. Our next
goal is to find a lower bound on the spectra of \W\ and use a squeeze-type
argument. The following theorems will accomplish that.
\begin{thm}
\label{AP}Suppose $\ph:\D \rightarrow\D$ is analytic
with Denjoy-Wolff point $a$, $\ph_{n}\rightarrow a$ uniformly in
\D, and $\psi\in \Hi$ is continuous at $z=a$ with $\psi(a)\neq0$.
Then $\psi(a)$ is in $\sigma_{ap}(T_{\psi}\C)$.\end{thm}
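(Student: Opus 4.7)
The plan is to exhibit an explicit sequence of approximate eigenvectors for $\T\C$ at $\psi(a)$, built from the forward orbit of the constant function $1$ under $\T\C$. Set
\[
\zeta_n := \psi\cdot(\psi\circ\ph)\cdot(\psi\circ\ph_2)\cdots(\psi\circ\ph_{n-1}),
\]
so $\zeta_n\in\Hi\subseteq\Ht$ for each $n\ge 1$, and a short induction shows $(\T\C)^n 1=\zeta_n$ together with the telescoping relation
\[
(\T\C)\zeta_n = \zeta_{n+1} = \zeta_n\cdot(\psi\circ\ph_n).
\]
Since $\psi(a)\neq 0$ forces $\psi\not\equiv 0$, and each iterate $\ph_k$ is non-constant, every factor of $\zeta_n$ is a nonzero element of $\Hi$; hence $\zeta_n\not\equiv 0$ and $\|\zeta_n\|_{\Ht}>0$ for every $n$.

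The key one-line calculation is then
\[
(\T\C-\psi(a)I)\zeta_n \;=\; \zeta_{n+1}-\psi(a)\zeta_n \;=\; \zeta_n\bigl(\psi\circ\ph_n-\psi(a)\bigr),
\]
and since $\psi\circ\ph_n-\psi(a)\in\Hi$ acts as a multiplier on \Ht\ with norm $\|\psi\circ\ph_n-\psi(a)\|_\infty$, we obtain
\[
\|(\T\C-\psi(a)I)\zeta_n\|_{\Ht} \;\le\; \|\psi\circ\ph_n-\psi(a)\|_\infty\,\|\zeta_n\|_{\Ht}.
\]
Taking $f_n:=\zeta_n/\|\zeta_n\|_{\Ht}$ produces unit vectors in \Ht\ satisfying
\[
\|(\T\C-\psi(a)I)f_n\|_{\Ht} \;\le\; \|\psi\circ\ph_n-\psi(a)\|_\infty.
\]

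Exactly as in the proof of Lemma \ref{opconv}, UCI for \ph\ together with continuity of $\psi$ at $a$ forces $\|\psi\circ\ph_n-\psi(a)\|_\infty\to 0$, so $\|(\T\C-\psi(a)I)f_n\|_{\Ht}\to 0$ and $\psi(a)\in\sigma_{ap}(\T\C)$. There is no real obstacle: the only step that requires a choice is to recognize that $f_n\propto\zeta_n=(\T\C)^n 1$ is the natural candidate, after which the estimate reduces to the standard $\Hi$-multiplier bound together with the uniform-convergence content already built into the UCI hypothesis.
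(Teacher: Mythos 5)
Your proof is correct and is essentially the paper's own argument: your $\zeta_n=(\T\C)^n1$ is just $\psi(a)^{n}$ times the paper's test vector $h_{n-1}=\prod_{k=0}^{n-1}\frac{\psi\circ\ph_k}{\psi(a)}$, so after normalization the approximate eigenvectors coincide, and the telescoping identity and the $\Hi$-multiplier bound $\|\zeta_n(\psi\circ\ph_n-\psi(a))\|_2\le\|\zeta_n\|_2\,\|\psi\circ\ph_n-\psi(a)\|_\infty$ are exactly the steps used there. The only addition you make, checking $\zeta_n\not\equiv0$, is a point the paper leaves implicit.
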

\begin{proof}
Let 
\[
h_{m}(z)=\prod_{n=0}^{m}\frac{\psi\circ\ph_{n}}{\psi(a)}
\]
Note that $T_{\psi}\C h_{m}(z)=\psi(a)h_{m+1}(z)$. These vectors
are finite products of \Hi\ functions, so they belong in
\Hi\ and therefore to $H^{2}$ as well. The unit vectors
we will use will be $H_{m}=\frac{h_{m}}{\left\Vert h_{m}\right\Vert _{2}}$.
We wish to show $\left\Vert \left(T_{\psi}\C-\psi(a)I\right)H_{m}\right\Vert _{2}\rightarrow0$
as $m\rightarrow\infty$.
\begin{align*}
\left\Vert \left(T_{\psi}\C-\psi(a)I\right)H_{m}\right\Vert _{2} & =\frac{1}{\left\Vert h_{m}\right\Vert _{2}}\left\Vert \left(T_{\psi}\C-\psi(a)I\right)h_{m}\right\Vert _{2}\\
 & =\frac{1}{\left\Vert h_{m}\right\Vert _{2}}\left\Vert \psi(a)h_{m+1}-\psi(a)h_{m}\right\Vert _{2}\\
 & =\frac{1}{\left\Vert h_{m}\right\Vert _{2}}\left\Vert h_{m}\psi\circ\ph_{m+1}(z)-\psi(a)h_{m}\right\Vert _{2}\\
 & =\frac{1}{\left\Vert h_{m}\right\Vert _{2}}\left\Vert h_{m}\left(\psi\circ\ph_{m+1}(z)-\psi(a)\right)\right\Vert _{2}\\
 & \leq\frac{1}{\left\Vert h_{m}\right\Vert _{2}}\left\Vert h_{m}\right\Vert _{2}\left\Vert \psi\circ\ph_{m+1}(z)-\psi(a)\right\Vert _{\infty}\\
 & =\left\Vert \psi\circ\ph_{m+1}(z)-\psi(a)\right\Vert _{\infty}\rightarrow0;
\end{align*}
and the last line is by UCI holding for \ph\ and continuity
of $\psi$.\end{proof}
\begin{thm}
\label{AP2} Suppose $\ph:\D\rightarrow\D$ is analytic
with Denjoy-Wolff point $a$, $\ph_{n}\rightarrow a$ uniformly in
$D$, and $\psi\in \Hi$ is continuous at $z=a$ with $\psi(a)\neq0$.
Then for any eigenvalue $\lambda$ of \C, $\psi(a)\lambda\in\sigma_{ap}(T_{\psi}\C)$.
In particular, $\sigma_{p}(\psi(a)\C)\subseteq\sigma_{ap}(T_{\psi}\C)$.\end{thm}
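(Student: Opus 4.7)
The plan is to mimic the construction in Theorem~\ref{AP}, but incorporate an eigenvector of \C. Let $\lambda$ be an eigenvalue of \C\ with eigenvector $f\in H^2$, so $f\circ\ph = \lambda f$. As before, set
\[
h_m(z)\ =\ \prod_{n=0}^{m}\frac{\psi\circ\ph_n}{\psi(a)},
\]
which lies in \Hi\ as a finite product of \Hi\ functions, and define the candidate approximate eigenvectors
\[
g_m\ =\ f\cdot h_m\ \in\ H^2.
\]
These are nonzero since $f\neq 0$ and no factor $\psi\circ\ph_n$ vanishes identically (as $\psi(a)\neq 0$ together with $\ph_n\to a$).

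A short direct calculation should give the key recursion. Writing out
\[
T_\psi \C\, g_m\ =\ \psi(z)\cdot\bigl(f\circ\ph\bigr)(z)\cdot\bigl(h_m\circ\ph\bigr)(z)
\ =\ \psi(z)\cdot\lambda f(z)\cdot\prod_{n=1}^{m+1}\frac{\psi\circ\ph_n}{\psi(a)},
\]
and then absorbing the leading $\psi$ as the $n=0$ factor $\psi(a)\cdot(\psi/\psi(a))$, one gets
\[
T_\psi \C\, g_m\ =\ \lambda\,\psi(a)\,g_{m+1}.
\]
So with unit vectors $G_m = g_m/\|g_m\|_2$,
\[
\bigl\|(T_\psi \C - \lambda\psi(a)I)G_m\bigr\|_2
\ =\ \frac{|\lambda\psi(a)|}{\|g_m\|_2}\,\bigl\|f h_m\bigl(\tfrac{\psi\circ\ph_{m+1}}{\psi(a)}-1\bigr)\bigr\|_2
\ \leq\ |\lambda|\,\bigl\|\psi\circ\ph_{m+1}-\psi(a)\bigr\|_\infty,
\]
where the bound $\|f h_m u\|_2\leq \|u\|_\infty \|f h_m\|_2 = \|u\|_\infty\|g_m\|_2$ is used with $u=\frac{\psi\circ\ph_{m+1}}{\psi(a)}-1$. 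By UCI and continuity of $\psi$ at $a$, the right-hand side tends to zero, so $\lambda\psi(a)\in\sigma_{ap}(T_\psi\C)$.

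The only real subtlety is checking that $g_m\not\equiv 0$, so that $G_m$ is a legitimate unit vector; this is handled by the remark above that neither $f$ nor any $\psi\circ\ph_n$ vanishes identically on \D. The ``in particular'' statement is immediate: if $\mu\in\sigma_p(\psi(a)\C)$ then $\mu/\psi(a)\in\sigma_p(\C)$, and the main assertion gives $\mu = \psi(a)\cdot(\mu/\psi(a))\in\sigma_{ap}(T_\psi\C)$. I do not anticipate any serious obstacle; the construction is essentially the same Bernoulli-style telescoping used in Theorem~\ref{AP}, twisted by the eigenfunction $f$.
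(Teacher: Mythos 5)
Your proposal is correct and is essentially the paper's own argument: the paper likewise forms $g_m = g\,h_m$ with $g$ an eigenvector of \C\ and $h_m$ the product from Theorem~\ref{AP}, derives $T_\psi\C g_m = \psi(a)\lambda g_{m+1}$, and bounds $\|(T_\psi\C-\psi(a)\lambda I)G_m\|_2$ by $|\lambda|\,\|\psi\circ\ph_{m+1}-\psi(a)\|_\infty\to 0$. Your added remark that $g_m\not\equiv 0$ is a fine point the paper leaves implicit, but it changes nothing.
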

\begin{proof}
Let $h_{m}$ be defined as in Theorem \ref{AP}, and let $g$ be an
eigenvector for \C\ with eigenvalue $\lambda$. Then since
the vectors $h_{m}$ are all in \Hi, $g_{m}=gh_{m}$ are
all in $H^{2}$. As before, we have 
\begin{align*}
T_{\psi}\C g_{m} & =T_{\psi}\C h_{m}g\\
 & =\left(\psi h_{m}\circ\ph\right)\left(g\circ\ph\right)\\
 & =\left(\psi(a)h_{m+1}\right)\left(\lambda g\right)\\
 & =\psi(a)\lambda h_{m+1}g\\
 & =\psi(a)\lambda g_{m+1}.
\end{align*}
Let $G_{m}=\frac{g_{m}}{\left\Vert g_{m}\right\Vert _{2}}$. Then
we have 
\begin{align*}
\left\Vert \left(T_{\psi}\C-\psi(a)\lambda I\right)G_{m}\right\Vert _{2} & =\frac{1}{\left\Vert g_{m}\right\Vert _{2}}\left\Vert (T_{\psi}\C -\psi(a)\lambda I)g_{m}\right\Vert _{2}\\
 & =\frac{1}{\left\Vert g_{m}\right\Vert _{2}}\left\Vert T_{\psi}\C g_{m}-\psi(a)\lambda g_{m}\right\Vert _{2}\\
 & =\frac{1}{\left\Vert g_{m}\right\Vert _{2}}\left\Vert \psi(a)\lambda g_{m+1}-\psi(a)\lambda g_{m}\right\Vert _{2}\\
 & =\frac{1}{\left\Vert g_{m}\right\Vert _{2}}\left\Vert \psi(a)\lambda g_{m}\left(\frac{\psi\circ\ph_{m+1}}{\psi(a)}\right)-\psi(a)\lambda g_{m}\right\Vert _{2}\\
 & =\frac{1}{\left\Vert g_{m}\right\Vert _{2}}\left\Vert \lambda g_{m}\left(\psi\circ\ph_{m+1}-\psi(a)\right)\right\Vert _{2}\\
 & \leq\frac{1}{\left\Vert g_{m}\right\Vert _{2}}\left\Vert \lambda g_{m}\right\Vert _{2}\left\Vert \psi\circ\ph_{m+1}-\psi(a)\right\Vert _{\infty}\\
 & =|\lambda|\left\Vert \psi\circ\ph_{m+1}-\psi(a)\right\Vert _{\infty}\rightarrow0.
\end{align*}
The last line is again by UCI holding for \ph\ and continuity
of $\psi$. Since this is true for any eigenvalue $\lambda$ of \C,
we have $\sigma_{p}(\psi(a)\C)\subseteq\sigma_{ap}(T_{\psi}\C)$. 
\end{proof}
Taking this together with Theorem \ref{unisubset}, we get the following
string of inequalities:
\begin{cor}
\label{squeeze}Suppose $\ph:\D\rightarrow\D$ is
analytic with Denjoy-Wolff point $a$, $\ph_{n}\rightarrow a$ uniformly
in \D, and $\psi\in \Hi$ is continuous at $z=a$ with $\psi(a)\neq0$.
Then we have 
\[
\overline{\sigma_{p}(\psi(a)\C)}\subseteq\overline{\sigma_{ap}(T_{\psi}\C)}\subseteq\sigma(T_{\psi}\C)\subseteq\sigma(\psi(a)\C)
\]

\end{cor}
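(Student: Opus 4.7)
The plan is a straightforward assembly: each of the three inclusions in the displayed chain is either a direct quotation of an earlier result or an immediate consequence of general spectral theory, so no new technical ideas are needed.

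First, for the inclusion $\overline{\sigma_{p}(\psi(a)\C)}\subseteq\overline{\sigma_{ap}(T_{\psi}\C)}$, I would invoke Theorem \ref{AP2}, which already gives $\sigma_{p}(\psi(a)\C)\subseteq\sigma_{ap}(T_{\psi}\C)$. Closure is monotone under set inclusion, so passing to closures on both sides yields exactly what is claimed.

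For the middle inclusion $\overline{\sigma_{ap}(T_{\psi}\C)}\subseteq\sigma(T_{\psi}\C)$, I would use the two standard facts that for any bounded operator $A$ on a Hilbert space the approximate point spectrum is contained in the spectrum, and that the spectrum is closed in $\Co$. Applied to $A=T_{\psi}\C$, these two facts combine to give $\overline{\sigma_{ap}(T_{\psi}\C)}\subseteq\overline{\sigma(T_{\psi}\C)}=\sigma(T_{\psi}\C)$.

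Finally, the last inclusion $\sigma(T_{\psi}\C)\subseteq\sigma(\psi(a)\C)$ is precisely the content of Theorem \ref{unisubset}, so nothing beyond a citation is required. Since the corollary is just bookkeeping over results already established, there is no serious obstacle; the only point worth flagging is that the closure on the right of $\sigma(T_{\psi}\C)$ is superfluous (the spectrum is already closed), which is why the chain terminates in $\sigma(\psi(a)\C)$ with no closure on it.
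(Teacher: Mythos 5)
Your proposal is correct and matches the paper's own proof, which likewise cites Theorem \ref{AP2} for the first inclusion, dismisses the middle one as standard (the approximate point spectrum lies in the closed set $\sigma(T_{\psi}\C)$), and cites Theorem \ref{unisubset} for the last. The only difference is that you spell out the closure-monotonicity step explicitly, which the paper leaves implicit.
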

In particular, if $\overline{\sigma_{p}(\C)}=\sigma(\C)$,
then $\sigma(T_{\psi}\C)=\sigma(\psi(a)\C)$.
\begin{proof}
The first containment is given by Theorem \ref{AP2}; the second containment
is trivial; the third containment is given by Theorem \ref{unisubset}. 
\end{proof}
As a consequence of this corollary, we can define the spectrum in
the case where $\ph'(a)<1$, and give some examples where $\ph'(a)=1$.
\begin{cor}
Suppose $\ph:\D\rightarrow\D$ is analytic with Denjoy-Wolff
point $a$, $\ph_{n}\rightarrow a$ uniformly in \D, and $\ph'(a)<1$.
Then for any $\psi\in \Hi$ continuous at $z=a$ with $\psi(a)\neq0$,
$\sigma(\W)=\sigma(\psi(a)\C)$.\end{cor}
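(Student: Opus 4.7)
The plan is to invoke the ``in particular'' portion of Corollary~\ref{squeeze}. Since $\psi(a)\neq 0$, scalar multiplication by $\psi(a)$ is a bijection of \Co\ commuting with spectrum, point spectrum, and closure; so it suffices to show $\overline{\sigma_p(\C)}=\sigma(\C)$. Once that is in hand, the three-term chain in Corollary~\ref{squeeze} collapses and gives $\sigma(\W)=\sigma(\psi(a)\C)$. I would split into cases based on whether the Denjoy--Wolff point $a$ lies in \D\ or on \bD.

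If $|a|<1$, the corollary to Theorem~\ref{DbarinD} shows that UCI forces \C\ to be power-compact. For such an operator on an infinite-dimensional Hilbert space, $\sigma(\C)\setminus\{0\}$ consists entirely of eigenvalues whose only possible accumulation point is $0$. The standard K\"onigs-function analysis (cf.~\cite{book}) identifies these as the powers $\{\ph'(a)^n : n\ge 0\}$, and since $|\ph'(a)|<1$ they accumulate at $0$; thus $0\in\overline{\sigma_p(\C)}$ and $\overline{\sigma_p(\C)}=\sigma(\C)$.

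If $|a|=1$, the hypothesis $\ph'(a)<1$ places \ph\ in the hyperbolic regime. UCI excludes hyperbolic automorphisms: any such map would have a second fixed point on \bD, contradicting the unique-fixed-point theorem proved earlier in the paper. So \ph\ is a hyperbolic non-automorphism, and the classical Cowen--Kamowitz spectral theorem for such maps (see~\cite{book}) gives $\sigma(\C)=\{\lambda\in\Co:|\lambda|\le\ph'(a)^{-1/2}\}$, with every point of the open disk an eigenvalue (produced via Koenigs-type intertwiners on the right half-plane). In particular $\overline{\sigma_p(\C)}=\sigma(\C)$.

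The main obstacle is the hyperbolic case: density of $\sigma_p(\C)$ in $\sigma(\C)$ is not automatic from UCI but is supplied by the classical half-plane eigenfunction construction. The interior case, by contrast, follows almost immediately from power-compactness plus routine spectral theory. With both cases handled, the corollary is a one-line consequence of Corollary~\ref{squeeze}.
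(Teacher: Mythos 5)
Your proposal is correct and follows essentially the same route as the paper: reduce everything to showing $\overline{\sigma_{p}(\C)}=\sigma(\C)$, obtain that from the classical result that for a boundary Denjoy--Wolff point with $\ph'(a)<1$ (and $a$ the unique fixed point) every spectral point other than $0$ and the peripheral circle is an eigenvalue of infinite multiplicity, and then apply Corollary~\ref{squeeze}. The only difference is cosmetic: you also write out the interior fixed-point case via power-compactness, which the paper leaves to its earlier remarks on compact weighted composition operators.
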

\begin{proof}
When the Denjoy-Wolff point of \ph\ is on the boundary with $\ph'(a)<1$
and $a$ is the only fixed point of $\ph$, then every point in the
spectrum except for $0$ and the peripheral spectrum is an eigenvalue
of infinite multiplicity~\cite{book}. Thus $\overline{\sigma_{p}(\C)}=\sigma(\C)$
and $\sigma(\W)=\sigma(\psi(a)\C)$ by Corollary \ref{squeeze}. \end{proof}
\begin{example}
If $\ph(z)=\frac{1}{2-z}$ so that $\ph(1)=1,\ph'(1)=1$, then
it is known that \C\ has spectrum $[0,1]$ and point spectrum
$(0,1)$~\cite{book}. Since $\overline{\sigma_{p}(\C)}=\sigma(\C)$,
we have $\sigma(\W)=\sigma(\psi(a)\C)$ by Corollary~\ref{squeeze},
for any $\psi\in \Hi$ continuous at $z=1$ with $\psi(1)\neq0$.
\end{example}
So far, our work in this section has not taken the value of $\ph'(a)$
into account until the corollary above. When $\ph'(a)<1$, we can
actually be much more specific about the point spectrum, which we
will do in the next section.

\section{Point Spectra of \W\ when $\ph'(a)<1$}

For this section, our goal is to show that except for $0$ and the
peripheral spectrum, $\sigma(\W)$ otherwise consists entirely of
eigenvalues when $\ph'(a)<1$. We accomplish this by extending the
vector in the proof of Theorem \ref{AP} to an infinite series bounded
by $\ph'(a)^{n}$.
\begin{thm}
\label{eigenvector} Suppose $\ph:\D\rightarrow\D$ is analytic with
Denjoy-Wolff point $a$, $0<|\ph'(a)|<1$, and $\ph_{n}\rightarrow a$
uniformly in \D. Then for any $\psi$ in \Hi\ that is bounded
away from zero on \D\ and continuous at $a$,
there is an eigenvector $h$ for $T_{\psi}\C$ with eigenvalue
$\psi(a)$ and $h$ invertible in \Hi. \end{thm}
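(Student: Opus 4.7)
The natural candidate for the eigenvector is the infinite product obtained by letting $m\to\infty$ in the partial products $h_m$ from the proof of Theorem~\ref{AP}. With the convention $\ph_0(z)=z$, set
\[
h(z)\;=\;\prod_{n=0}^{\infty}\frac{\psi(\ph_n(z))}{\psi(a)}.
\]
Once $h$ is known to exist as a nonzero element of $\Hi$, the eigenvector identity $\T\C h=\psi(a)h$ falls out by a telescoping: composing with $\ph$ shifts every index up by one, so
\[
\T\C h(z)\;=\;\psi(z)\prod_{n=0}^{\infty}\frac{\psi(\ph_{n+1}(z))}{\psi(a)}\;=\;\psi(z)\cdot\frac{\psi(a)}{\psi(z)}\,h(z)\;=\;\psi(a)h(z),
\]
where the second equality just factors the missing $n=0$ term $\psi(z)/\psi(a)$ out of $h$. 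Invertibility of $h$ in $\Hi$ will then follow because $\psi$ is bounded and bounded away from $0$, so each factor lies in a fixed annulus around $1$ and the factors cluster at $1$ for large $n$.

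The bulk of the work is uniform convergence of the infinite product on $\D$. I would take logarithms: since $\psi$ is bounded away from $0$ on the simply connected disc, $g:=\log(\psi/\psi(a))$ is a well-defined bounded holomorphic function on $\D$ with $g(a)=0$ by continuity, and it suffices to prove
\[
\sum_{n=0}^{\infty}\|g\circ\ph_n\|_\infty\;<\;\infty.
\]
To obtain a rate for $\ph_n\to a$, I would use $0<|\ph'(a)|<1$ together with UCI: for $N$ sufficiently large, $\ph_N(\D)$ lies inside some horocycle $H(a,\lambda)$, and then Julia's Lemma applied iteratively, exactly as in the proof of Theorem~\ref{julia}, yields the uniform estimate
\[
|\ph_{N+k}(z)-a|\;\le\;C\,|\ph'(a)|^{k/2},\qquad z\in\D.
\]

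The main obstacle is converting this geometric approach of $\ph_n(z)$ to $a$ into a summable rate for $\|g\circ\ph_n\|_\infty$. Mere continuity of $\psi$ (hence of $g$) at $a$ gives only $\|g\circ\ph_n\|_\infty\to 0$ with no control on the rate, so one has to combine the geometric decay of $|\ph_n(z)-a|$ with a quantitative estimate on the behavior of an $\Hi$ function along iterates confined to a fixed horocycle at $a$; this is where the analyticity of $g$ and the nontangential nature of the Julia approach have to be exploited. Once a summable estimate of the form $\|g\circ\ph_n\|_\infty\le C'\,|\ph'(a)|^{n/2}$ is in hand, absolute convergence of the logarithm series produces $h$ as an invertible element of $\Hi$, and the telescoping computation above finishes the proof.
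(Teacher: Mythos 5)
Your candidate eigenvector, the telescoping identity, and the appeal to Julia's Lemma for the uniform rate $|\ph_n(z)-a|\le C|\ph'(a)|^{n/2}$ are exactly the paper's argument: the paper writes $\psi=e^{\eta}$, sets $g=\sum_{n\ge 0}(\eta\circ\ph_n-\eta(a))$ and $h=e^{g}$, which is your infinite product in logarithmic form, and invertibility of $h$ follows just as you say because $g$ is bounded above and below. The problem is that your write-up stops at the one step that carries all the weight. You correctly observe that continuity of $\psi$ at $a$ gives only $\|g\circ\ph_n\|_\infty\to 0$ with no rate, and that a summable bound such as $\|g\circ\ph_n\|_\infty\le C'|\ph'(a)|^{n/2}$ is what is actually needed --- and then you state that ``the analyticity of $g$ and the nontangential nature of the Julia approach have to be exploited'' without carrying this out. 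As written, the convergence of the series (equivalently, of the infinite product) is not established, and everything downstream depends on it. That is a genuine gap, not a routine detail.

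For comparison, the paper closes this step by asserting that $\eta=\log\psi$ is Lipschitz on $\D\cup\{a\}$ (on the grounds that a bounded analytic function has bounded derivative on $\D$), which immediately gives $|\eta(\ph_n(z))-\eta(a)|\le\tilde K|\ph_n(z)-a|\le K\tilde K|\ph'(a)|^{n/2}$ and hence absolute convergence in \Hi. Your unease about this point is well founded: Lipschitz continuity is a genuine extra regularity input on $\psi$ near $a$, not a consequence of $\psi\in\Hi$ alone, since the Schwarz--Pick lemma only yields $|\eta'(z)|\le C(1-|z|^2)^{-1}$. To complete your proof you must either adopt the paper's Lipschitz estimate for $\eta$ explicitly, or produce the quantitative modulus of continuity you allude to --- for instance by combining the interior derivative bound with the fact that the orbit stays in horodisks $H(a,\ph'(a)^n\lambda)$, where $1-|\ph_n(z)|^2\ge|\ph_n(z)-a|^2/(\lambda\,\ph'(a)^n)$ gives some control of $|\eta'|$ along the orbit. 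Until one of these is supplied, the existence of $h$ has not been proved.
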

\begin{proof}
Since $\psi$ is a bounded, analytic, and non-vanishing map on \D,
we may assume that there exists a bounded analytic map $\eta$ so
that $\psi=e^{\eta}$. Since $\eta$ is analytic and bounded on \D,
it has bounded derivative there, so $\eta$ is Lipschitz on \D,
i.e. $|\eta(z_{1})-\eta(z_{2})|\leq K|z_{1}-z_{2}|$ for $z_{1},z_{2}\in\D$
and some constant $\tilde{K}$ independent of $z_{1},z_{2}$. Since
$\eta$ is continuous at $a$, it can be seen that the above inequality
holds on $\D\cup\{a\}$. Additionally, since $\ph_{n}$ converges
uniformly on \D, $|\ph_{n}(z)-a|\leq K\ph'(a)^n$ for
some constant $K$ independent of $z$, as seen in the proof of Theorem
\ref{julia} above. Since 
\[
\lim_{n\rightarrow\infty}\eta\circ\ph_n=\eta(a),
\]
we want to show that $\sum_{n=0}^{\infty}(\eta\circ\ph_n-\eta(a))$
converges in \Hi. Since 
\[
|\eta(\ph_n(z))-\eta(a)|\leq\tilde{K}|\ph_n(z)-a|\leq K\tilde{K}\ph'(a)^{n}
\]
and $|\ph'(a)|<1$, the series converges. Set 
\[
g=\sum_{n=0}^{\infty}(\eta\circ\varphi_{n}-\eta(a)).
\]
Then $h(z)=e^{g(z)}$ is an eigenfunction for \W\ with eigenvalue
$\psi(a)$. Since $\psi$ is bounded below, so is $\eta$, and now
$g(z)$ is bounded above and below, so $\frac{1}{h}=e^{-g(z)}$ is
also in \Hi.
\end{proof}
The next theorem shows that the special eigenvector above completely
identifies the point spectrum with that of $\psi(a)\C$.
\begin{thm}
\label{evexists} If $T_{\psi}\C$ has eigenvalue $\alpha$
with an eigenvector $g\in \Hi$ for $\alpha$, and $\lambda$
is any eigenvalue of \C\ with eigenvector $f$, then $\alpha\lambda$
is an eigenvalue of $T_{\psi}\C$ with eigenvector $gf$. Furthermore,
if $\frac{1}{g}\in \Hi$ as well, then $\sigma_{p}(\alpha \C)=\sigma_{p}(T_{\psi}\C)$.
In particular, if $\alpha=\psi(a)$, then $\sigma_{p}(\psi(a)\C)=\sigma_{p}(T_{\psi}\C)$. \end{thm}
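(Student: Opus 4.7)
The first assertion is a direct computation. Since $T_\psi C_\varphi g = \alpha g$ means $\psi\,(g\circ\varphi) = \alpha g$ pointwise on $\D$, and $C_\varphi f = \lambda f$ means $f\circ\varphi = \lambda f$, I would compute
\[
T_\psi C_\varphi (gf) \;=\; \psi\,((gf)\circ\varphi) \;=\; \bigl(\psi\,(g\circ\varphi)\bigr)(f\circ\varphi) \;=\; (\alpha g)(\lambda f) \;=\; \alpha\lambda\,(gf).
\]
The vector $gf$ lies in $H^2$ because $g\in\Hi$ and $f\in\Ht$, and it is nonzero because $g$ and $f$ are nonvanishing as analytic eigenvectors. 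So $\alpha\lambda\in\sigma_p(T_\psi C_\varphi)$ with eigenvector $gf$.

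For the second assertion, under the additional assumption $1/g\in\Hi$, I would show the reverse containment $\sigma_p(T_\psi C_\varphi)\subseteq\sigma_p(\alpha C_\varphi)$ by an explicit ``division'' construction. Given an eigenvector $h\in\Ht$ with $T_\psi C_\varphi h = \mu h$, I would consider $h/g$; since $1/g\in\Hi$, this quotient lies in $\Ht$, and it is nonzero. Using that $g$ is nonvanishing on $\D$ (so $g\circ\varphi$ is too), I can rewrite the eigenvalue equation for $g$ as $\psi = \alpha g/(g\circ\varphi)$ and substitute into the eigenvalue equation for $h$ to get
\[
\frac{h\circ\varphi}{g\circ\varphi} \;=\; \frac{\mu}{\alpha}\,\frac{h}{g},
\]
which is exactly $C_\varphi(h/g) = (\mu/\alpha)(h/g)$. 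This shows $\mu/\alpha\in\sigma_p(C_\varphi)$, hence $\mu\in\sigma_p(\alpha C_\varphi)$. Combined with the forward containment already established by the first part of the theorem, this yields $\sigma_p(\alpha C_\varphi) = \sigma_p(T_\psi C_\varphi)$. A small caveat worth noting: we need $\alpha\neq 0$ for the division, but this is automatic, since $\alpha=0$ would force $\psi(g\circ\varphi)\equiv 0$ on $\D$, contradicting that $\psi\not\equiv 0$, $g\not\equiv 0$, and $\varphi$ is nonconstant.

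The final ``In particular'' clause is immediate by specializing to $\alpha = \psi(a)$: Theorem~\ref{eigenvector} supplies precisely the kind of eigenvector $h$ required (an $\Hi$ eigenvector for $T_\psi C_\varphi$ with eigenvalue $\psi(a)$ that is invertible in $\Hi$), so the hypothesis $1/g\in\Hi$ of the second assertion is met.

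The proof is essentially computational, and I do not anticipate a serious obstacle. The one spot that requires care is the reverse direction: the key insight is that multiplication by $g$ intertwines $\psi(a)C_\varphi$ with $T_\psi C_\varphi$ in a manner that is actually invertible when $1/g\in\Hi$, so ``dividing out'' the weight converts eigenvectors of $T_\psi C_\varphi$ into eigenvectors of $C_\varphi$ and vice versa. The only subtleties are confirming membership in $\Ht$ and handling the trivial non-issue that $\alpha\neq 0$.
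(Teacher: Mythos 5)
Your proof is correct and follows essentially the same route as the paper: the same product computation for the forward containment, and the same division of an arbitrary eigenvector $h$ by the invertible $\Hi$ eigenvector $g$ to convert the eigenvalue equation for $T_\psi\C$ into one for $\alpha\C$. Your extra remarks (that $\alpha\neq 0$, and that $gf\not\equiv 0$ because analytic functions form an integral domain) are fine points the paper leaves implicit, though note that eigenvectors need not be \emph{nonvanishing} in general --- only not identically zero, which is all the first part requires.
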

\begin{proof}
We have $T_{\psi}\C g=\alpha g$ and $\C f=\lambda f$.
Note since $g\in \Hi,gf\in H^{2}$. Then 
\[
T_{\psi}\C(gf)=\psi(gf)\circ\ph=(\psi g\circ\ph)(f\circ\ph)=(\alpha g)(\lambda f)=\alpha\lambda(gf)
\]
so $gf$ is an eigenfunction for $T_{\psi}\C$ with eigenvalue
$\alpha\lambda$. So $\sigma_{p}(\alpha \C)\subseteq\sigma_{p}(T_{\psi}\C)$.

Now, if $\frac{1}{g}\in \Hi$ as well, then for any eigenvalue
$\mu\in\sigma_{p}(T_{\psi}\C)$ with eigenvector $h$, we can
write $v=\frac{h}{g}$ which is in $H^{2}$, so $gv=h$. Then 
\[
\mu gv=\mu h=T_{\psi}\C h=T_{\psi}\C gv=(\psi g\circ\ph)(v\circ\ph)=\alpha gv\circ\ph
\]
Dividing the far sides by $g$, we see that $\mu v=\alpha v\circ\ph$,
so $\mu\in\sigma_{p}(\alpha \C)$. Thus $\sigma_{p}(T_{\psi}\C)\subseteq\sigma_{p}(\alpha \C)$,
so now $\sigma_{p}(\alpha \C)=\sigma_{p}(T_{\psi}\C)$.
\end{proof}
Putting these theorems together, we have the following corollary.
\begin{cor}
\label{tempresult} Suppose $\ph:\D\rightarrow\D$ is analytic with
Denjoy-Wolff point $a$, $0<|\ph'(a)|<1$, and $\ph_{n}\rightarrow a$
uniformly in \D. Then for any $\psi\in \Hi$ that is bounded
away from zero on \D\ and continuous at $a$,
$\sigma_{p}(\W)=\sigma_{p}(\psi(a)\C)$. 
\end{cor}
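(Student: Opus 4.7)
The statement is essentially the concatenation of the two preceding theorems, so the plan is to verify that their hypotheses are met and assemble them.

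First, I would apply Theorem~\ref{eigenvector} to the weight $\psi$ and symbol $\ph$. The hypotheses match exactly: $\ph$ has Denjoy-Wolff point $a$, $0<|\ph'(a)|<1$, the iterates $\ph_n$ converge uniformly to $a$ on \D, and $\psi$ is bounded away from zero on \D\ and continuous at $a$. This produces an eigenvector $h\in \Hi$ for $\W=T_\psi\C$ with eigenvalue $\psi(a)$, such that $1/h$ also lies in \Hi.

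Next, I would apply Theorem~\ref{evexists} with $\alpha=\psi(a)$ and $g=h$. The assumptions of that theorem are exactly that $T_\psi\C$ has an eigenvalue $\alpha$ with an eigenvector in \Hi\ whose reciprocal is also in \Hi, which is precisely what Theorem~\ref{eigenvector} just supplied. The "in particular" clause of Theorem~\ref{evexists} then yields
\[
\sigma_p(\psi(a)\C)=\sigma_p(\W),
\]
which is the desired conclusion.

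So there is no real obstacle beyond checking that each hypothesis of Theorems~\ref{eigenvector} and \ref{evexists} is satisfied by the setup of the corollary; the rest of the work was already done in those two theorems. The only subtlety worth flagging is the requirement that $\psi$ be bounded away from zero on \D\ (not merely $\psi(a)\neq 0$), which is needed to produce the $\Hi$-invertible eigenfunction $h$ via the exponential representation $\psi=e^\eta$; this is exactly the hypothesis the corollary carries from Theorem~\ref{eigenvector}.
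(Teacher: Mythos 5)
Your proof is correct and is exactly the argument the paper intends: the corollary is stated with the preamble ``Putting these theorems together,'' and the combination of Theorem~\ref{eigenvector} (producing the $\Hi$-invertible eigenvector $h$ with eigenvalue $\psi(a)$) with the ``in particular'' clause of Theorem~\ref{evexists} is precisely the paper's route. Your remark about why boundedness away from zero is needed (to write $\psi=e^{\eta}$) is also the right subtlety to flag.
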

Although we required stricter conditions on $\psi$ to achieve the
above corollary, we can in fact use UCI holding for $\ph$
to relax those conditions:
\begin{cor}
\label{pointspectrum} Suppose $\ph:\D\rightarrow\D$ is analytic
with Denjoy-Wolff point $a,0<|\ph'(a)|<1$, and $\ph_{n}\rightarrow a$
uniformly in \D. Then for any $\psi\in \Hi$ that is continuous
at $a$ with $\psi(a)\neq0$, $\sigma_{p}(\W)=\sigma_{p}(\psi(a)\C)$. \end{cor}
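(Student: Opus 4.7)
The plan is to reduce to Corollary~\ref{tempresult} by replacing $\psi$ with the ``shifted'' weight $\psi\circ\ph_{N}$ for sufficiently large $N$; by UCI together with continuity of $\psi$ at $a$ with $\psi(a)\neq 0$, this new weight will be bounded away from zero on \D\ even when $\psi$ itself is not. The argument splits naturally into two parts: showing that this substitution preserves the point spectrum, and then verifying that $\psi\circ\ph_{N}$ qualifies as a weight for Corollary~\ref{tempresult} with the same value $\psi(a)$ at the Denjoy-Wolff point.

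For the first part, I would iterate Lemma~\ref{AB}(\ref{enu:AB-1}) to obtain
\[
\sigma_{p}(T_{\psi}\C)\setminus\{0\}=\sigma_{p}(T_{\psi\circ\ph_{n}}\C)\setminus\{0\}\quad\text{for every }n\ge 0.
\]
With $A=T_{\psi}$ and $B=\C$, if $T_{\psi}\C f=\lambda f$ with $\lambda\neq 0$ and $f\neq 0$, then $\C f\neq 0$ because \C\ has trivial kernel (as \ph\ is non-constant), so Lemma~\ref{AB}(\ref{enu:AB-1}) produces $\C f$ as an eigenvector of $\C T_{\psi}=T_{\psi\circ\ph}\C$ with eigenvalue $\lambda$. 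Symmetrically, starting from $T_{\psi\circ\ph}\C g=\lambda g=C_{\varphi} T_{\psi}g$, one has $T_{\psi}g\neq 0$ because $\psi\not\equiv 0$ makes $T_{\psi}$ injective on $H^{2}$, and Lemma~\ref{AB}(\ref{enu:AB-1}) now with $A=\C,B=T_{\psi}$ yields $T_{\psi}g$ as an eigenvector of $T_{\psi}\C$ with eigenvalue $\lambda$. Iterating this one-step identity with $\psi$ successively replaced by $\psi\circ\ph_{k}$---each of which is bounded, analytic, and not identically zero since $\ph_{k}$ has open range---gives the displayed equation at every level.

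For the second part, choose $\delta>0$ with $|\psi(w)-\psi(a)|<|\psi(a)|/2$ whenever $w$ lies within $\delta$ of $a$, and then use UCI to pick $N$ with $|\ph_{N}(z)-a|<\delta$ for every $z\in\D$. The composition $\psi\circ\ph_{N}$ is then within $|\psi(a)|/2$ of $\psi(a)$ throughout \D, hence bounded away from zero; it belongs to \Hi, and is continuous at $a$ with value $\psi(a)$. Corollary~\ref{tempresult} therefore applies to the weighted composition operator with weight $\psi\circ\ph_{N}$ and gives $\sigma_{p}(T_{\psi\circ\ph_{N}}\C)=\sigma_{p}(\psi(a)\C)$. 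Combined with the first part, and noting that $0$ lies in neither point spectrum (since $T_{\psi}\C f=0$ forces $\psi\cdot(f\circ\ph)=0$ and hence $f\equiv 0$, while $\psi(a)\C$ is injective because $\psi(a)\neq 0$), this yields $\sigma_{p}(\W)=\sigma_{p}(\psi(a)\C)$.

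The main subtlety is ensuring that the eigenvalue transfer of Lemma~\ref{AB}(\ref{enu:AB-1}), rather than just the weaker nonzero-spectrum identity Lemma~\ref{AB}(\ref{enu:AB-2}), survives the iteration; this is what forces one to track triviality of the kernels of \C\ and of each $T_{\psi\circ\ph_{k}}$ at every stage and to observe that the iterated weight is never identically zero.
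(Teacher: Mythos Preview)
Your proposal is correct and follows essentially the same route as the paper: reduce to Corollary~\ref{tempresult} by passing from $\psi$ to $\psi\circ\ph_{N}$ via Lemma~\ref{AB}, using UCI and continuity to ensure the shifted weight is bounded away from zero. You are more explicit than the paper about tracking the trivial kernels at each iteration and handling the eigenvalue $0$ separately, but the underlying argument is the same.
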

\begin{proof}
By Lemma \ref{AB}, $\sigma_{p}(T_{\psi}\C)=\sigma_{p}(T_{\psi\circ\ph_{n}}\C)$
for all $n$. Since $\psi$ is continuous at $a$ and $\psi(a)\neq0$,
there is $\epsilon>0$ so that $\psi(z)$ is bounded away from zero
on the set $\{z:|z-a|<\epsilon\}$. Since $\ph_{n}\rightarrow a$
uniformly, there is $N$ such that for $n\geq N$, $|\ph_{n}(z)-a|<\epsilon$,
for all $z\in\D$. Then $T_{\psi\circ\ph_{N}}\C$ satisfies
the conditions of Corollary \ref{tempresult}, so $\sigma_{p}(T_{\psi}\C)=\sigma_{p}(T_{\psi\circ\ph_{N}}\C)=\sigma_{p}(\psi(a)\C)$. 
\end{proof}
Since we can now entirely characterize the spectrum and point spectrum
when $\ph'(a)<1$, we illustrate this with an example below.
\begin{example}
Let $\ph(z)=\frac{1}{2}z+\frac{1}{2}$ and $\psi(z)=e^{(2-z)}$.
Note $\ph_{n}(z)=\frac{1}{2^{n}}z+1-\frac{1}{2^{n}}$. Then for $\eta$ as in the
proof of Theorem~\ref{eigenvector}, we have
$\eta(z)=2-z$ and we can compute $g(z)$ as in Theorem \ref{eigenvector}:
\end{example}
\begin{align*}
\sum_{n=0}^{\infty}\eta\circ\ph_{n}-\eta(1) & =\sum_{n=0}^{\infty}2-(\frac{1}{2^{n}}z+1-\frac{1}{2^{n}})-1\\
 & =\sum_{n=0}^{\infty}\frac{1}{2^{n}}(1-z)\\
 & =2-2z
\end{align*}

Then $h(z)=e^{(2-2z)}$ is an \Hi\ eigenvector for \W\
with eigenvalue $\psi(1)=e$, as is seen below: 
\[
\psi h\circ\ph=e^{(2-z)}e^{(2-2(\frac{1}{2}z+\frac{1}{2}))}=e^{(2-z)}e^{(1-z)}=e^{(1+2-2z)}=e^{1}e^{(2-2z)}=eh
\]

Note that $\frac{1}{h}=e^{(2z-2)}$ is also in \Hi. It is
known that the functions $(1-z)^{\lambda}$ are eigenvectors
of \C\ with eigenvalue $(\frac{1}{2})^{\lambda}$, that these
belong to $H^{2}$ when Re$(\lambda)>-\frac{1}{2}$, and that $\sigma_{p}(\C)=\{\lambda:0<\lambda<\sqrt{2}\}$~\cite{book}. Then $\sigma_{p}(\W)=\{\lambda:0<\lambda<\sqrt{2}e\}$
and $e^{(2-2z)}(1-z)^{\lambda}$ is an eigenvector for eigenvalue
$\frac{e}{2^{\lambda}}$. 

Our work here depended on the fact that $\ph'(a)<1$. The following
two examples show that an analogous statement cannot be made when
$\ph'(a)=1$.
\begin{example}
Let $\ph(z)=\frac{1}{2-z}$ and $\psi(z)=2-z$. Then we see that
$h(z)=1-z$ is an eigenvector for \W\ with eigenvalue $1$. It is
known that \C\ has spectrum $[0,1]$ with point spectrum $(0,1)$~\cite{book}. Since $h$ is in \Hi, any eigenvector
$g$ for an eigenvalue $\lambda$ of \C\ corresponds to an
eigenvector $gh$ of \W\ with eigenvalue $\lambda$. Thus \W\
has spectrum $[0,1]$ and every element $0<\lambda\leq1$ is an eigenvalue. 
\end{example}

\begin{example}
Let $\ph(z)=\frac{1}{2-z}$ and $\psi(z)=\frac{1}{1-\frac{1}{2}z}$.
The first two authors~\cite{wcomp} showed that the operator
\W\ is self-adjoint and has no eigenvalues, but rather consists
entirely of approximate point spectrum. 
\end{example}

\section{Seminormality of \W}

In~\cite{wcomp}, the first two authors showed that the semigroup
of parabolic non-automorphisms studied in this paper have a companion
weight so that \W\ is self-adjoint. The form of the companion weight
associated with most known self-adjoint~\cite{wcomp}, normal~\cite{BN}, and 
cohyponormal~\cite{CJK}
weighted composition operators is $\psi=pK_{\sigma(0)}$, where $p$
is a constant and $\sigma$ is the Cowen auxillary function of $\ph$
(which is linear fractional in these situations). As a result of our
work above, we eliminate possibilities for $\psi$ when \ph\ is
a linear fractional non-automorphism with Denjoy-Wolff point on $\partial D$
and \W\ is seminormal.

First, we show that when \ph\ is a parabolic non-automorphism,
there are no other weight functions $\psi$ continuous at the Denjoy-Wolff
point $a$ so that \W\ is (co)hyponormal.
\begin{thm}
Let $\ph:\D\rightarrow\D$ be a parabolic non-automorphism with Denjoy-Wolf
point $a$ and let $\psi\in \Hi$ be continuous at $z=a$.
If \W\ is (co)hyponormal, then it is normal and $\psi$ is a multiple
of $K_{\sigma(0)}$, where $\sigma$ is the Cowen auxillary function
of $\ph$. Furthermore, if $\psi(a)$ is real, then \W\ is self-adjoint. \end{thm}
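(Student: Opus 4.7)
The plan is to identify $\sigma(\W)$ as a line segment, use Putnam's inequality to upgrade (co)hyponormality to normality, and then read off $\psi$ from the normality equations.

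First, since a parabolic non-automorphism is a linear fractional non-automorphism with $\ph'(a)=1$, UCI holds for \ph\ by the example in Section 2. It is classical~\cite{book} that for such \ph, $\sigma(\C)=[0,1]$ and $\sigma_{p}(\C)=(0,1)$, so Corollary~\ref{squeeze} yields $\sigma(\W)=\psi(a)\cdot[0,1]$. This is a line segment in \Co\ and therefore has two-dimensional Lebesgue measure zero.

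Now Putnam's inequality says that a hyponormal operator $T$ satisfies $\|T^{\ast}T-TT^{\ast}\|\le\operatorname{area}(\sigma(T))/\pi$. Since the area is zero, if \W\ is hyponormal then $\W^{\ast}\W=\W\W^{\ast}$, so \W\ is normal. If instead \W\ is cohyponormal, the same argument applied to \Ws, whose spectrum $\overline{\psi(a)}\cdot[0,1]$ is still of area zero, shows \Ws\ is normal, and hence so is \W.

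It remains to extract the form of $\psi$ from the normality of \W. I would test the identity $\|\W K_{w}\|=\|\Ws K_{w}\|$ on reproducing kernels. The right-hand side is easy: the adjoint identity $\Ws K_{w}=\overline{\psi(w)}K_{\ph(w)}$ gives $\|\Ws K_{w}\|^{2}=|\psi(w)|^{2}/(1-|\ph(w)|^{2})$. For the left-hand side, $\W K_{w}=\psi\cdot(K_{w}\circ\ph)$ can be put in closed form using either the linear-fractional expression for \ph\ or Cowen's adjoint formula $\Cs=T_{g}C_{\sigma}T_{h}^{\ast}$. Equating the two expressions on all of \D\ should force the ratio $\psi/K_{\sigma(0)}$ to be a constant $p$. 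Cleanly isolating $\psi$ from the remaining factors in this last step is the main obstacle.

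Finally, if $\psi(a)$ is real, then so is $p$, because $K_{\sigma(0)}(a)$ is a positive real number when the Denjoy-Wolff point of a parabolic non-automorphism lies on \bD. In that case $\psi=pK_{\sigma(0)}$ with real $p$ is precisely the weight for which~\cite{wcomp} showed $W_{\psi,\ph}$ to be self-adjoint.
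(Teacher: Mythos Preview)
Your first two steps match the paper's proof almost verbatim: Corollary~\ref{squeeze} gives $\sigma(\W)=\psi(a)\cdot[0,1]$, a set of planar measure zero, and then Putnam's inequality (equivalently, the result cited from~\cite{MP}) forces a (co)hyponormal \W\ to be normal.

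The gap is in your third step. You try to recover the form of $\psi$ directly from the normality identity $\|\W K_{w}\|=\|\Ws K_{w}\|$, and you concede that isolating $\psi$ there is ``the main obstacle.'' That concession is accurate: the kernel computation you sketch does not obviously close, and in any case equality of norms on kernels is only a consequence of normality, not an equivalent reformulation, so it is not clear it carries enough information to pin $\psi$ down. The paper avoids this computation entirely by inserting one extra observation you are missing: a \emph{normal operator with real spectrum is self-adjoint}~\cite{conway}. So the argument is run in the opposite order from yours. First assume $\psi(a)\in\mathbb{R}$ (after rotating so $a=1$); then $\sigma(\W)=[0,\psi(a)]\subset\mathbb{R}$, so normality immediately upgrades to self-adjointness; and now the classification of self-adjoint weighted composition operators from~\cite{wcomp} forces $\psi$ to be a real multiple of $K_{\sigma(0)}$. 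The general case follows by replacing $\psi$ with $\lambda\psi$ for a scalar $\lambda$ chosen so that $\lambda\psi(a)$ is real.

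In short, you should not try to deduce $\psi=pK_{\sigma(0)}$ first and self-adjointness second; deduce self-adjointness first from the real spectrum, and then read off $\psi$ from the existing self-adjoint classification.
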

\begin{proof}
Without loss of generality, assume $a=1$ since composition with a
rotation is unitary. For now, assume $\psi(a)$ is real. Any (co)hyponormal
operator whose spectrum has zero area is normal~\cite{MP}.
Since \C\ has spectrum $[0,1]$ and point spectrum $(0,1)$,
\W\ (and therefore also \Ws) has spectrum equal to the line
segment $[0,\psi(a)]$ by Corollary \ref{squeeze}. Since line segments
have zero area, \W\ is normal. Since \W\ is (sub)normal and $\sigma(\W)\subseteq\mathbb{R}$,
it is self-adjoint~\cite{conway}. The self-adjoint weighted
composition operators on $H^{2}$ have been completely characterized
in~\cite{wcomp} and $\psi$ must therefore be a real multiple
of $K_{\sigma(0)}$.

If $\psi(a)$ is not real, we get the same result for the weight $\lambda\psi$,
where $\lambda$ is a non-zero constant so that $\lambda\psi(a)$
is real. Then we see that $\psi$ must be a (non-real) multiple of
$K_{\sigma(0)}$ and that \W\ is normal. 
\end{proof}
Next, let \ph\ be a hyperbolic non-automorphism. Here, \W\ can
be cohyponormal and in fact cosubnormal. For example, if $\ph(z)=sz+1-s,0<s<1$,
then $\sigma(0)=0,K_{0}=1$ and \C\ is a ``weighted'' composition
operator which is cosubnormal. In~\cite{BN}, it
is shown that if $\psi$ is in $C^{1}$ on $\overline{\D}$ then \W\
cannot be essentially normal. Due to our understanding of the spectrum
from Section 4 above, we can show that no weight $\psi$ in \Hi\
continuous at the Denjoy-Wolff point (but with no other conditions
on $\psi$ at the boundary) creates a hyponormal weighted composition
operator when \ph\ is a hyperbolic non-automorphism. However, first
we need a lemma.
\begin{lemma}
\label{inprod} Let $g$ be a vector in $H^{2}$ such that $\left<g,z^{n}g\right>=\left<g,g\right>$
for all integers $n\geq1$. Then $g$ is the zero vector. \end{lemma}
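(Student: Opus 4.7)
The plan is to reduce the hypothesis to a statement about Taylor coefficients and apply Cauchy--Schwarz on a tail. Write $g(z)=\sum_{k=0}^{\infty} a_k z^k$. For $n\geq 1$, the shifted function is $z^n g(z)=\sum_{j=n}^{\infty} a_{j-n}z^j$, so the standard $H^2$ inner product gives
\[
\langle g,z^n g\rangle = \sum_{j=n}^{\infty} a_j\,\overline{a_{j-n}},
\]
while $\langle g,g\rangle = \sum_{k=0}^{\infty}|a_k|^2 = \|g\|_2^2$. The hypothesis therefore becomes
\[
\sum_{j=n}^{\infty} a_j\,\overline{a_{j-n}} \;=\; \|g\|_2^2 \qquad \text{for every } n\geq 1.
\]

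Next I would bound the left-hand side uniformly in $n$ and send $n\to\infty$. By Cauchy--Schwarz,
\[
\left|\sum_{j=n}^{\infty} a_j\,\overline{a_{j-n}}\right|
\;\leq\; \left(\sum_{j=n}^{\infty}|a_j|^2\right)^{1/2}\!\left(\sum_{j=n}^{\infty}|a_{j-n}|^2\right)^{1/2}
\;\leq\; \|g\|_2\left(\sum_{j=n}^{\infty}|a_j|^2\right)^{1/2}.
\]
Because $\sum_{k=0}^{\infty}|a_k|^2$ converges, the tail $\sum_{j\geq n}|a_j|^2$ tends to $0$ as $n\to\infty$, so the upper bound tends to $0$. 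Combining this with the identity above forces $\|g\|_2^2=0$, and hence $g\equiv 0$.

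There is no real obstacle here; the only thing to be careful about is the bookkeeping of the index shift and making sure the Cauchy--Schwarz estimate is applied to the honest tail (so that the bound actually decays to zero). Equivalently, one could observe that $\langle g,z^n g\rangle$ is the $n$th Fourier coefficient of $|g|^2\in L^1(\partial\D)$, and invoke the Riemann--Lebesgue lemma to conclude that a nonzero constant sequence of Fourier coefficients is impossible; but the direct coefficient computation above is shorter and self-contained.
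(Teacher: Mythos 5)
Your proof is correct and follows essentially the same route as the paper: both estimate $\langle g,z^ng\rangle$ by Cauchy--Schwarz against the tail $\sum_{j\geq n}|a_j|^2$ of the coefficient series. The only cosmetic difference is that you let $n\to\infty$ to force $\|g\|_2^2=0$, while the paper fixes a single $n$ with tail norm below $\|g\|/2$ and derives the contradiction $\|g\|^2<\|g\|^2/2$.
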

\begin{proof}
Suppose that $g$ is not the zero vector. Writing $g$ as $g=\sum_{k=0}^{\infty}a_{k}z^{k}$,
since $g$ is not the zero vector, not all $a_{k}$ are zero. Therefore,
there is an integer $n$ such that the vector $g_{n}=\sum_{k=n}^{\infty}a_{k}z^{k}$
satisfies $\left\Vert g_{n}\right\Vert <\left\Vert g\right\Vert /2$.
Then 
\[
\left\Vert g\right\Vert ^{2}=\left|\left<g,g\right>\right|=\left|\left<g,z^{n}g\right>\right|=\left|\left<g_{n},g\right>\right|\leq\left\Vert g_{n}\right\Vert \left\Vert g\right\Vert <\left\Vert g\right\Vert ^{2}/2
\]
which is impossible. Therefore $g$ is the zero vector.\end{proof}
\begin{thm}
Let $\ph:\D\rightarrow\D$ be a hyperbolic non-automorphism. There
is no $\psi\in \Hi$ continuous at $z=a$ such that \W\ is
hyponormal. \end{thm}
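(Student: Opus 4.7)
Suppose for contradiction that $W = \W$ is hyponormal. First, conjugating by the unitary rotation $f(z)\mapsto f(az)$ preserves hyponormality and carries the Denjoy--Wolff point to $1$, so we may assume $a=1$. Since $1$ is an eigenvalue of $\C$ (with the constant function $1$ as eigenvector) and $\psi(1)\neq 0$, Corollary \ref{pointspectrum} supplies a nonzero $h\in\Ht$ with $Wh=\psi(1)h$. A standard fact about hyponormal operators---applied to the hyponormal operator $W-\psi(1)I$, which satisfies $\|(W-\psi(1))^*f\|\leq\|(W-\psi(1))f\|$ for every $f$---forces $W^*h=\overline{\psi(1)}\,h$. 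Consequently, for every $g\in\Ht$,
\[
\langle Wg,h\rangle \;=\; \langle g,W^*h\rangle \;=\; \psi(1)\,\langle g,h\rangle.
\]

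The eigenvector equation $\psi\cdot(h\circ\ph)=\psi(1)\,h$ makes the substitution $g=\ph_k^n h$ especially clean. For each $k,n\geq 0$ (with $\ph_0(z)=z$),
\[
W(\ph_k^n h) \;=\; \psi\cdot(\ph_k^n h)\circ\ph \;=\; \ph_{k+1}^n\cdot\psi(h\circ\ph) \;=\; \psi(1)\,\ph_{k+1}^n h,
\]
so the displayed identity becomes $\int_{\bD}\ph_{k+1}^n|h|^2\,dm=\int_{\bD}\ph_k^n|h|^2\,dm$. Induction on $k$ gives $\int_{\bD}\ph_k^n|h|^2\,dm=\int_{\bD}z^n|h|^2\,dm$ for every $k\geq 0$. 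Since $\ph$ satisfies UCI (by Theorem \ref{julia}), $\ph_k\to 1$ in the $\Hi$-norm, hence $\ph_k^n\to 1$ in $L^\infty(\bD)$; passing to the limit yields $\int_{\bD}z^n|h|^2\,dm = \|h\|^2$ for every $n\geq 0$, that is, $\langle z^n h,h\rangle=\langle h,h\rangle$.

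Taking complex conjugates in this identity gives $\langle h,z^n h\rangle=\langle h,h\rangle$ for all $n\geq 1$, so Lemma \ref{inprod} applied to $g=h$ forces $h=0$, contradicting the choice of $h$ as a nonzero eigenvector. The main obstacle is spotting the iterative substitution $g=\ph_k^n h$: the one-step equality $\langle Wg,h\rangle=\psi(1)\,\langle g,h\rangle$ is essentially a rewriting of hyponormality via Putnam's observation, but iterating it produces a telescoping chain whose limit, powered by UCI, converts moments of $\ph$ into moments of $z$---exactly the input needed to trigger Lemma \ref{inprod}.
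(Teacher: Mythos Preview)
Your argument is correct and terminates with Lemma~\ref{inprod} just as the paper's does, but the route is genuinely different. The paper first conjugates $\W$ by a unitary weighted composition operator to put $\ph$ in the canonical form $\ph(z)=sz+1-s$, invokes Theorem~\ref{eigenvector} to obtain an $H^\infty$ eigenvector $h$, and then uses the explicit family $h(1-z)^n$ of eigenvectors with pairwise distinct eigenvalues $\psi(1)s^n$; orthogonality of eigenvectors for distinct eigenvalues (a consequence of hyponormality) gives $\langle h,(1-z)^n h\rangle=0$, which a binomial expansion converts into $\langle h,z^n h\rangle=\langle h,h\rangle$. You instead leave $\ph$ general after a rotation, pull the eigenvector from Corollary~\ref{pointspectrum} (so only $h\in H^2$ is needed), use the companion fact $W^*h=\overline{\psi(1)}\,h$, and iterate the substitution $g=\ph_k^{\,n}h$; UCI then pushes the resulting chain of equalities to the limit and delivers the same moment identity. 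Your approach buys a bit of robustness---no reduction to a canonical $\ph$, no need for $h\in H^\infty$, and no appeal to the ``bounded away from zero'' hypothesis of Theorem~\ref{eigenvector}---while the paper's approach is more explicit once the canonical form is available, replacing your limiting argument with a finite algebraic expansion.
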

\begin{proof}
Without loss of generality, assume $\ph(z)=sz+1-s$ for some $0<s<1$.
(Otherwise, conjugate \W\ by the unitary weighted composition operator
$T_{g}C_{\zeta}$ where $g=K_{\zeta(0)}$ and $\zeta$ is an automorphism
so that $\zeta\circ\ph\circ\zeta$ is in this form. This will change
the weight function $\psi$, but it will still be continuous at $a$
and it is otherwise arbitrary.) Now assume \W\ is hyponormal.

It is known that $(1-z)^{n}$ is an eigenvector for \C\ with
eigenvalue $s^{n}$. By Theorem \ref{eigenvector}, there is an eigenfunction
$h\in \Hi$ for \W\ with eigenvalue $\psi(a)$, and thus
$h(1-z)^{n}$ is an eigenfunction for \W\ with eigenvalue $\psi(a)s^{n}$
by Theorem~\ref{evexists}. Since \W\ is hyponormal, eigenvectors
corresponding to different eigenvalues must be perpendicular~\cite{conway}.
Then 
\[
0=\left<h,(1-z)h\right>=\left<h,h\right>-\left<h,zh\right>\Rightarrow\left<h,h\right>=\left<h,zh\right>
\]
Keeping this result in mind, we now consider the vectors $h$ and
$(1-z)^{2}h$: 
\[
0=\left<h,(1-z)^{2}h\right>=\left<h,h\right>-2\left<h,zh\right>+\left<h,z^{2}h\right>\Rightarrow\left<h,h\right>=\left<h,z^{2}h\right>
\]

Continuing inductively, we have $\left<h,h\right>=\left<h,z^{n}h\right>$
for all integers $n>0$. Therefore, by Lemma \ref{inprod}, $h$ is
the $0$ vector, which is a contradiction since eigenvectors are non-zero.
Therefore \W\ cannot be hyponormal. 
\end{proof}

\section{Further Questions}

Below is a list of questions that would extend our work:
\begin{enumerate}
\item Characterize exactly when the iterates $\ph_n$ converge uniformly  to $a$ on all
of \D.
\item Completely characterize the point spectrum of \W\ when $|a|=1$,  $\ph'(a)=1$
and the iterates $\ph_n$  converge uniformly to $a$ in all of \D.
\item Completely characterize (co)(hypo)normal weighted composition operators
on $H^{2}$. (For example, it has \textit{not} been shown that if
\W\ is normal, \ph\ must be linear fractional.)
\item In our work and many of our referenced papers, it seems that when
\ph\ has exactly one fixed point $a$ in $\overline{\D}$, that
$\sigma(\W)=\sigma(\psi(a)\C)$. How often is this true?
\end{enumerate}

\vspace{.3in}


\begin{thebibliography}{99}
\normalsize
\bibitem{BMS} { P.~S.~Bourdon, V.~Matache, {\rm and} J.~H.~Shapiro},
{On convergence to the Denjoy-Wolff point}, \ill{49}{2005}{405}{430}

\bibitem{BN} { P.~S.~Bourdon {\rm and} S.~Narayan},
{Normal weighted composition operators on the Hardy space $\Ht(U)$}, 
\jmaa{367}{2010}{278}{286}

\bibitem{conway} {J.~B.~Conway,}
{\em The Theory of Subnormal Operators}, Math. Surveys and Monographs, vol. 36,
American Mathematical Society, Providence, RI, 2000.

\bibitem{commut} { C.~C.~Cowen},
{The commutant of an analytic Toeplitz operator}, \tams{239}{1978}{1}{31}

\bibitem{commut2} { C.~C.~Cowen,}
{An analytic Toeplitz operator that  commutes with a compact operator}, \jfa{ 36}{1980}{169}{184}
  
\bibitem{Co88}{ C.~C.~Cowen,} {Linear fractional
composition operators on \Ht}, \inteq{11}{1988}{151}{160}
 
\bibitem{book} {C.~C.~Cowen {\rm and }B.~D.~MacCluer,}
{\em Composition Operators on Spaces of Analytic Functions,} CRC
Press, Boca Raton, 1995.
 
\bibitem{adj} { C.~C.~Cowen {\rm and }E.~A.~Gallardo Gutierrez,}
{A new class of operators and a description of adjoints of composition operators},
\jfa{238}{2006}{447}{462}

\bibitem{wcomp}{\caps C.~C.~Cowen {\rm and }E. Ko,} {Hermitian weighted
composition operators on \Ht},\\ \tams{362}{2010}{5771}{5801}

\bibitem{CJK}{\caps C.~C.~Cowen, S.~Jung, {\rm and }E. Ko,} {Normal and cohyponormal weighted composition operators on \Ht}, to appear: {\jour Operator Theory: Advances and Applications}.

\bibitem{forelli}{ F.~Forelli,} {The isometries of $H^p$},  \canj{16}{1964}{721}{728}

\bibitem{gunatTh}{G.~Gunatillake,} {\em Weighted Composition 
Operators,} Thesis, Purdue University, 2005.

\bibitem{gunatInv}{G.~Gunatillake,} {Invertible weighted composition 
operators,} \jfa{261}{2011}{831}{860}

\bibitem{HLNS}{O.~Hyv\"arinen, M. Lindstr\"om, I. Nieminen, {\rm and }E. Saukko,} { 
Spectra of weighted composition operators with automorphic symbols,} \jfa{261}{2011}{831}{860}

\bibitem{MP} {M.~Martin {\rm and }M.~Putinar,}
{\em Lectures on Hyponormal Operators,} Operator Theory: Advances and Applications, vol. 39.
Birkhauser Verlag, Basel, 1989.

\end{thebibliography}
\end{document}